\newtheorem{theorem}{Theorem}[section]
\newtheorem{lemma}[theorem]{Lemma}
\newtheorem{remark}[theorem]{Remark}
\newtheorem{thm}{Theorem}[section]
\newtheorem{cor}{Corollary}[section]
\numberwithin{equation}{section}
\definecolor{darkslategray}{rgb}{0.18, 0.31, 0.31}
\definecolor{warmblack}{rgb}{0.0, 0.26, 0.26}
\definecolor{astral}{RGB}{46,116,181}
\journal{....................... }
\begin{document}
	\begin{frontmatter}
		\title{ \textcolor{warmblack}{\bf On $A$-numerical radius inequalities for $2\times2$ operator matrices }}

	\author[label1]{Nirmal Chandra Rout}\ead{nrout89@gmail.com}
	
	\author[label2]{Satyajit Sahoo\corref{cor2}}\ead{satyajitsahoo2010@gmail.com}

		\author[label1]{Debasisha Mishra}\ead{dmishra@nitrr.ac.in}
		
		\address[label1]{Department of Mathematics, National Institute of Technology Raipur, Raipur-492010, India}
		\address[label2]{P.G. Department of Mathematics, Utkal University, Vanivihar, Bhubaneswar-751004, India}

		\cortext[cor2]{Corresponding author}

		\begin{abstract}
			\textcolor{warmblack}{
 				 Let ($\mathcal{H}, \langle . , .\rangle )$ be a complex Hilbert space and $A$ be a positive bounded linear operator on it. Let $w_A(T)$  be the $A$-numerical radius and $\|T\|_A$ be the $A$-operator seminorm of an operator $T$ acting on the semi-Hilbertian space $(\mathcal{H}, \langle .,.\rangle_A),$ where $\langle x, y\rangle_A:=\langle Ax, y\rangle$ for all $x,y\in \mathcal{H}$. In this article, we establish  several upper and lower bounds for $B$-numerical radius of $2\times 2$ operator matrices,  where $B=\begin{bmatrix}
    A & 0\\
     0 & A
    \end{bmatrix}$. 
    Further, we prove some refinements of earlier $A$-numerical radius inequalities for  operators.}
		\end{abstract}
		
		\begin{keyword}
			$A$-numerical radius; Positive operator; Semi-inner product; Inequality; Operator matrix
		\end{keyword}
 	\end{frontmatter}
	
	\section{Introduction}\label{intro}
	Let $\mathcal{H}$ be a complex Hilbert space with inner product $\langle \cdot,\cdot\rangle$ and $\mathcal{B}(\mathcal{H})$ be the $\mathbb{C}^*$- algebra of all bounded linear operators on $\mathcal{H}$.
 For $T\in
\mathcal{L(H)}$, the {\it numerical range} of $T$ is defined as
 $$W(T)=\{\langle Tx, x \rangle: x\in \mathcal{H}, \|x\|=1
\}.$$ The {\it numerical radius} of $T$, denoted by $w(T)$,  is
defined as $ w(T)=\displaystyle\sup\{|z|: z\in W(T) \}.$
It is well-known that
$w(\cdot)$ defines a norm on $\mathcal{H}$, and is equivalent to the
usual operator norm $\|T\|=\displaystyle \sup  \{ \|Tx \|: x\in \mathcal{H}, \|x\|=1 \}.$ In fact, for
every $T \in \mathcal{L(H)}$, 
\begin{align}\label{p3100}
\frac{1}{2}\|T\|\leq w(T)\leq \|T\|.
\end{align}
 One may refer \cite{MBKS,TY,SND,SND1,SND2,HirKit} for several generalizations, refinements and applications of numerical radius inequalities in different settings which appeared in the last decade.
 Let $\|\cdot\|$ be the norm induced from $\langle \cdot,\cdot\rangle.$
 A selfadjoint operator $A\in\mathcal{B}(\mathcal{H})$ is called {\it positive} if $\langle Ax, x\rangle \geq 0$ for all $x \in \mathcal{H}$, and is called {\it strictly positive} if  $\langle Ax, x\rangle > 0$ for all non-zero $x\in \mathcal{H}$. We denote a positive (strictly positive) operator $A$ by $A \geq 0$ ($ A > 0$). Let $B$ be a $2\times 2$ diagonal operator matrix, in which each of the diagonal entries is a positive operator $A$. Through out this article, $A$ is always assumed to a positive operator. Clearly, if $A$ is a positive operator, it induces a positive semidefinite sesquilinear form, $\langle \cdot,\cdot\rangle_A: \mathcal{H}\times\mathcal{H}\rightarrow\mathbb{C}$ defined by $\langle x, y \rangle_A=\langle Ax,y\rangle,$ $x,y\in\mathcal{H}.$ 
 Let $\|\cdot\|_A$ denote the semi-norm on $\mathcal{H}$ induced by $\langle \cdot, \cdot \rangle_A,$ i.e., $\|x\|_A=\sqrt{\langle x, x \rangle_A}$ for all $x \in \mathcal{H}.$ It is easy to verify that $\|x\|_A$ is a norm
if and only if $A$ is a strictly positive operator. Also, $(\mathcal{H}, \|\cdot\|_A)$ is complete if and only if the range of $A$ ($\mathcal{R}(A)$) is closed in $\mathcal{H}.$ For $T \in\mathcal{B}\mathcal{(H)}$, $A$-operator seminorm of $T$, denoted as
$\|T\|_A,$ is defined as
$$\|T\|_A:=\sup_{x\in \overline{\mathcal{R}(A)},~x\neq 0}\frac{\|Tx\|_A}{\|x\|_A}=\inf\left\{c>0: \|Tx\|_A\leq c\|x\|_A,x\in \overline{\mathcal{R}(A)}\right\}<\infty.$$
We set $\mathcal{B}^A\mathcal{(H)}:=\{T\in \mathcal{B(H)}:\|T\|_A<\infty\}.$ It can be seen that $\mathcal{B}^A\mathcal{(H)}$ is not generally a subalgebra of $\mathcal{B(H)}$, and $\|T\|_A=0$ if and only if $ATA=0.$ For $T\in\mathcal{B}^A\mathcal{(H)},$ we also have 
$$\|T\|_A=\sup \{|\langle Tx,y\rangle_A|: x,y\in \overline{\mathcal{R}(A),} ~\|x\|_A=\|y\|_A=1\}.$$
If $AT\geq 0$, then the operator $T$ is called {\it $A$-positive}.  Note that if $T$ is $A$-positive, then 
$$\|T\|_A=\sup \{\langle Tx,x\rangle_A: x\in \mathcal{H},  \|x\|_A=1\}.$$
For $T\in \mathcal{B(H)},$ an operator $R\in \mathcal{B(H)}$ is called an {\it $A$-adjoint operator} of $T$ if for every $x,y\in \mathcal{H},$ we have $\langle Tx,y\rangle_A=\langle x, Ry\rangle_A,$ i.e., $AR=T^*A.$
By Douglas Theorem \cite{Doug}, the existence of an $A$-adjoint operator is not guaranteed.  In fact, an operator $T\in \mathcal{B(H)}$ may admit none, one or many $A$-adjoints. The set of all operators which admits $A$-adjoint is denoted by $\mathcal{B}_A\mathcal{(H)}.$ Note that $\mathcal{B}_A\mathcal{(H)}$ is a subalgebra of $\mathcal{B(H)}$ which is neither closed nor dense in $\mathcal{B(H)}.$ Moreover, the following inclusions $\mathcal{B}_A\mathcal{(H)}\subseteq \mathcal{B}^A\mathcal{(H)}\subseteq\mathcal{B}\mathcal{(H)}$ hold with equality if $A$ is injective and has a closed range. 

For $A\in \mathcal{B(H)}$ and $\mathcal{R}(A)$  is closed, the {\it Moore-Penrose inverse} of $A$ \cite{gro} is the
operator $X\in \mathcal{B(H)}$ which satisfies the following four Penrose equations:
\begin{center}
(1) $AXA = A$,~ (2) $XAX = X$,~ (3) $(A X)^* = A X$,~ (4) $(X A)^*= X A.$
\end{center}
It
is unique, and  is denoted by $A^\dagger.$
If $T\in \mathcal{B}_A\mathcal{(H)},$ the reduced solution of the equation $AX=T^*A$ is a distinguished $A$-adjoint operator of $T,$ which is denoted by $T^{\#_A}$ (see \cite{Mos}). Note that $T^{\#_A}=A^\dagger T^* A$. If $T\in \mathcal{B}_A(\mathcal{H}),$ then $AT^{\#_A}=T^*A.$ An operator $T\in \mathcal{B(H)}$ is said to be {\it $A$-selfadjoint} if $AT$ is selfadjoint, i.e., $AT=T^*A.$ Observe that if $T$ is $A$-selfadjoint, then $T\in \mathcal{B}_A(\mathcal{H}).$  However,  in general, $T\neq T^{\#_A}.$ For $T\in \mathcal{B}_A(\mathcal{H}),$ $T=T^{\#_A}$  if and only if $T$ is $A$-selfadjoint and $\mathcal{R}(T)\subseteq \overline{\mathcal{R}(A)}.$ Note that if $T\in \mathcal{B}_A(\mathcal{H}),$ then $T^{\#_A}\in \mathcal{B}_A(\mathcal{H}),$  $(T^{\#_A})^{\#_A}=PTP,$ where $P$ is an orthogonal projection onto $\overline{\mathcal{R}(A)},$ and $\left((T^{\#_A})^{\#_A}\right)^{\#_A}=T^{\#_A}.$ Also $T^{\#_A}T$ and $TT^{\#_A}$ are $A$-selfadjoint and $A$-positive operators. So,
\begin{align}\label{ineq0}
    \|T^{\#_A}T\|_A=\|TT^{\#_A}\|_A=\|T\|_A^2=\|T^{\#_A}\|_A^2.
\end{align}
An operator $U\in \mathcal{B}_A(\mathcal{H})$ is said to be $A$-unitary if $\|Ux\|_A=\|U^{\#_A}x\|_A=\|x\|_A$ for all $x\in \mathcal{H}.$ For $T\in \mathcal{B}_A(\mathcal{H})$ and $U$ is $A$-unitary, $w_A(U^{\#_A}TU)=w_A(T).$

 Again, for $T,S\in \mathcal{B}_A(\mathcal{H}),$ $(TS)^{\#_A}=S^{\#_A}T^{\#_A},$ $\|TS\|_A\leq \|T\|_A\|S\|_A$ and $\|Tx\|_A\leq \|T\|_A\|x\|_A$ for all $x\in \mathcal{H}.$ For $T\in \mathcal{B}_A(\mathcal{H})$, we can write $Re_A(T)=\frac{T+T^{\#_A}}{2}$ and $Im_A(T)=\frac{T-T^{\#_A}}{2i}$. For further details, we refer the reader to \cite{ARIS,ARIS2}. 
 in 2012, Saddi \cite{Saddi} defined {\it $A-$numerical radius} of $T,$ denoted as $w_A(T),$  for $T\in \mathcal{B(H)}$ as follows
$$w_A(T)=\sup\{|\langle Tx,x\rangle_A|:x\in \mathcal{H}, \|x\|_A=1\}.
$$
In 2019, Zamani \cite{Zam} showed that if $T\in \mathcal{B}_A\mathcal{(H)}$, then 
\begin{align}\label{ineq00}
w_A(T)=\sup_{\theta\in \mathbb{R}}\left\|\frac{e^{i\theta}T+(e^{i\theta}T)^{\#_A}}{2}\right\|_A.
\end{align}
The author then extended the inequality \eqref{p3100} using $A$-numerical radius of $T$, and the same is illustrated next:
\begin{align}\label{ineq1}
    \frac{1}{2}\|T\|_A\leq w_A(T)\leq \|T\|_A.
\end{align}
Furthermore,  if $T$ is $A$-selfadjoint, then $w_A(T)=\|T\|_A$.
   In 2019,  Moslehian {\it et al.} \cite{MOS} further continued the study of $A$-numerical radius and established some inequalities for $A$-numerical radius.\\
   For a $2\times 2$ operator matrix $T,$  $B$-numerical radius of $T$ is defined as $$w_B(T)=\sup\{|\langle Tx,x\rangle_B|:x\in \mathcal{H}, \|x\|_B=1\},$$ where
    $B=\begin{bmatrix}
   A & 0\\
   0 & A
   \end{bmatrix}$.\\

   In 2019, Bhunia {\it et al.}  \cite{PINTU} studied $B$-numerical radius inequalities of $2\times 2$ operator matrices, where $B$ is a $2\times 2$ diagonal operator matrix whose diagonal entries are $A$. In this directions some authors has been studied many generalizations and refinements of $A$-numerical radius, for more details one can refer \cite{Pintu1, Feki, Pintu2}. This motivates us to further study on this topic.

The objective of this paper is
to present new $B$-numerical radius inequalities for  $2\times 2$ operator matrices. Further two refinements of the 1st inequality in \eqref{ineq1} is addressed in this article.
In this aspect, the article is  structured as follows. In Section 2,  we recall some upper and lower bounds for $B$-numerical radius inequalities for a $2\times 2 $ operator matrix.The next section contains our main results and is of two folds. First part establishes some upper and lower bounds for $2\times 2$ operator matrices while the second part deals with certain refinements of \eqref{ineq1}.  

\section{Preliminaries}
In 2020, Pintu {\it et al.} \cite{PINTU} proved the following lemma for $2\times 2$ operator matrices.
\begin{lemma}\label{lem0001}\textnormal{[Lemma 2.4 , \cite{PINTU}]} \\
Let $T_1, T_2\in \mathcal{B}_A(\mathcal{H}).$ Then the following results hold:
\begin{enumerate}
        \item [\textnormal{(i)}]
     $w_B\left(\begin{bmatrix}
    T_1 & 0\\
     0 & T_2
    \end{bmatrix}\right)= \max\{w_A(T_1), w_A(T_2)\}.$\\
    \item [\textnormal{(ii)}] If $A>0$, then $w_B\left(\begin{bmatrix}
    0 & T_1\\
     T_2 & 0
    \end{bmatrix}\right)=w_B\left(\begin{bmatrix}
    0 & T_2\\
     T_1 & 0
    \end{bmatrix}\right).$\\
    \item [\textnormal{(iii)}] If $A>0,$ then~for~any~$\theta\in\mathbb{R}, w_B\left(\begin{bmatrix}
    0 & T_1\\
     e^{i\theta}T_2 & 0
    \end{bmatrix}\right)=w_B\left(\begin{bmatrix}
    0 & T_1\\
     T_2 & 0
    \end{bmatrix}\right).$\\
    \item [\textnormal{(iv)}] If $A>0$,~ then~ $w_B\left(\begin{bmatrix}
    T_1 & T_2\\
     T_2 & T_1
    \end{bmatrix}\right)=\max\{w_A(T_1+T_2),w_A(T_1-T_2)\}.$\\
     In particular, $w_B\left(\begin{bmatrix}
    0 & T_1\\
     T_1 & 0
    \end{bmatrix}\right)=w_A(T_1).$
\end{enumerate}
\end{lemma}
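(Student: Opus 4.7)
The plan is to treat each of the four assertions uniformly by reducing the block-matrix computations to either a direct application of the definition of $w_B$ or a unitary-type conjugation that respects the $B$-inner product. The underlying observation that I would use throughout is that for $x=(x_1,x_2)^T\in\mathcal{H}\oplus\mathcal{H}$ one has $\|x\|_B^2=\|x_1\|_A^2+\|x_2\|_A^2$, and that for any $2\times 2$ operator matrix $T=[T_{ij}]$ the sesquilinear form $\langle Tx,x\rangle_B$ expands as $\sum_{i,j}\langle T_{ij}x_j,x_i\rangle_A$.

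For part (i), with $T=\mathrm{diag}(T_1,T_2)$, the cross terms vanish and
\[
|\langle Tx,x\rangle_B|\le w_A(T_1)\|x_1\|_A^2+w_A(T_2)\|x_2\|_A^2\le \max\{w_A(T_1),w_A(T_2)\}\,\|x\|_B^2,
\]
so $w_B(T)\le\max\{w_A(T_1),w_A(T_2)\}$. The reverse inequality is obtained by taking test vectors of the form $(x_1,0)^T$ and $(0,x_2)^T$ with $\|x_i\|_A=1$, which recover $w_A(T_i)$ individually.

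For parts (ii)--(iv), the unifying tool is the invariance $w_B(U^{\#_B}TU)=w_B(T)$ for $U$ that is $B$-unitary, together with rotation invariance $w_B(e^{i\phi}T)=w_B(T)$. I would first verify that three specific candidates are $B$-unitary by checking $BU^{\#_B}=U^*B$ directly: the swap $J=\begin{bmatrix}0&I\\I&0\end{bmatrix}$, the diagonal phase $D_\theta=\mathrm{diag}(e^{i\theta/2}I,\,I)$, and the Hadamard-type operator $H=\tfrac{1}{\sqrt 2}\begin{bmatrix}I&I\\I&-I\end{bmatrix}$. For each, the candidate $B$-adjoint equals the ordinary adjoint (since $B$ commutes with scalar blocks), and the $B$-isometry property follows from the parallelogram identity $\|x_1\pm x_2\|_A^2$ summed appropriately. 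Hypothesis $A>0$ is what ensures $\|\cdot\|_A$ is a genuine norm so that these are $B$-unitaries in the sense of the preceding section. Then:

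Part (ii) follows by computing $J^{\#_B}\begin{bmatrix}0&T_1\\T_2&0\end{bmatrix}J=\begin{bmatrix}0&T_2\\T_1&0\end{bmatrix}$. Part (iii) comes from the identity $w_B\!\left(\begin{bmatrix}0&T_1\\e^{i\theta}T_2&0\end{bmatrix}\right)=w_B\!\left(e^{-i\theta/2}\begin{bmatrix}0&T_1\\e^{i\theta}T_2&0\end{bmatrix}\right)=w_B\!\left(\begin{bmatrix}0&e^{-i\theta/2}T_1\\e^{i\theta/2}T_2&0\end{bmatrix}\right)$, after which conjugation by $D_\theta$ removes the phases symmetrically, landing on $\begin{bmatrix}0&T_1\\T_2&0\end{bmatrix}$. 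Part (iv) is handled by the Hadamard conjugation $H^{\#_B}\begin{bmatrix}T_1&T_2\\T_2&T_1\end{bmatrix}H=\mathrm{diag}(T_1+T_2,T_1-T_2)$, after which the value is read off from part (i); the special case $T_1=0$ then yields $w_B\!\left(\begin{bmatrix}0&T\\T&0\end{bmatrix}\right)=\max\{w_A(T),w_A(-T)\}=w_A(T)$.

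The only subtle step is confirming that $J$, $D_\theta$, and $H$ are genuinely $B$-unitary rather than merely Hilbert-space unitary; since the verification reduces to block-matrix multiplication against $B=\mathrm{diag}(A,A)$ and every block commutes trivially with $A$, this is mechanical once $A>0$ is assumed. Everything else is straightforward algebra and invocation of the already-stated conjugation invariance of $w_B$.
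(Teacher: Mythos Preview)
The paper does not prove this lemma at all: it is quoted verbatim from \cite{PINTU} as a preliminary result and no argument is supplied. Hence there is no ``paper's own proof'' to compare against; the relevant question is only whether your sketch is sound.

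Your approach is the standard one and is correct in substance. Part (i) is a direct sup computation, and parts (ii)--(iv) all follow from the $B$-unitary invariance $w_B(U^{\#_B}TU)=w_B(T)$ applied to the swap $J$, the diagonal phase $D_\theta$, and the Hadamard block $H$, exactly as you outline; each of these commutes with $B=\mathrm{diag}(A,A)$, so $U^{\#_B}=U^*$ and the verifications are mechanical. One small wrinkle: in part (iii) your conjugation by $D_\theta$ actually needs to be by $D_\theta^{-1}=D_\theta^*$ to land on $\begin{bmatrix}0&T_1\\T_2&0\end{bmatrix}$ rather than $\begin{bmatrix}0&e^{-i\theta}T_1\\e^{i\theta}T_2&0\end{bmatrix}$, but since $D_\theta^*$ is equally $B$-unitary this is harmless. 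Your remark that ``$A>0$ is what ensures $\|\cdot\|_A$ is a genuine norm so that these are $B$-unitaries'' is not quite the point: the operators $J$, $D_\theta$, $H$ satisfy $\|Ux\|_B=\|U^{\#_B}x\|_B=\|x\|_B$ regardless, and the invariance $w_B(U^{\#_B}TU)=w_B(T)$ is asserted in the paper without any strict-positivity hypothesis. The $A>0$ condition in \cite{PINTU} is presumably imposed so that $A^\dagger$ (and hence $U^{\#_B}$) is unambiguously available; it plays no further role in your argument.
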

 
In 2019, the authors of \cite{Pintu1} established an upper and lower bound for a $2\times 2$ operator matrix. \begin{lemma}\label{lem0002}\textnormal{[Theorem 4.3, \cite{Pintu1}]} \\
Let $T_1, T_2\in \mathcal{B}_A(\mathcal{H})$ where $A>0.$ If $T=\begin{bmatrix}
    0 & T_1\\
     T_2 & 0
    \end{bmatrix}$ and $B=\begin{bmatrix}
    0 & A\\
     A & 0
    \end{bmatrix}$ then
    \begin{align*}
    \frac{1}{2} \max\{w_A(T_1+T_2), w_A(T_1-T_2)\}&\leq w_B(T)\\
    &\leq \frac{1}{2}\{w_A(T_1+T_2)+ w_A(T_1-T_2)\}.
    \end{align*}
\end{lemma}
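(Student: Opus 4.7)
As written, $B=\begin{bmatrix}0&A\\A&0\end{bmatrix}$ is not positive: for $x=(x_1,x_2)^\top\in\mathcal{H}\oplus\mathcal{H}$ one has $\langle Bx,x\rangle=2\operatorname{Re}\langle Ax_1,x_2\rangle$, which is sign-indefinite, so neither $\|\cdot\|_B$ nor $w_B$ is defined by the conventions of Section~\ref{intro}. Throughout the rest of the paper (and inside Lemma~\ref{lem0001}, whose parts are the only things the proof can lean on) the symbol $B$ denotes $\operatorname{diag}(A,A)$, and I proceed under that reading, treating the off-diagonal display in the hypothesis as a misprint that should be corrected in the final version. If the authors genuinely intend a nonstandard, signed $w_B$ attached to the off-diagonal $B$, the plan below does not apply and the statement would have to be re-examined from first principles.

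\textbf{Decomposition and upper bound.} Split $T$ into its "symmetric" and "antisymmetric" off-diagonal parts,
\[
T=T_++T_-,\qquad T_+:=\begin{bmatrix}0 & \tfrac{T_1+T_2}{2}\\ \tfrac{T_1+T_2}{2} & 0\end{bmatrix},\qquad T_-:=\begin{bmatrix}0 & \tfrac{T_1-T_2}{2}\\ -\tfrac{T_1-T_2}{2} & 0\end{bmatrix}.
\]
The triangle inequality gives $w_B(T)\le w_B(T_+)+w_B(T_-)$. The "in particular" clause of Lemma~\ref{lem0001}(iv) yields $w_B(T_+)=\tfrac12 w_A(T_1+T_2)$. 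For $T_-$ I first invoke Lemma~\ref{lem0001}(iii) with $\theta=\pi$ to replace the $(2,1)$-entry $-\tfrac{T_1-T_2}{2}$ by $\tfrac{T_1-T_2}{2}$ without altering the $B$-numerical radius, and then apply the same "in particular" clause to obtain $w_B(T_-)=\tfrac12 w_A(T_1-T_2)$. Adding the two evaluations gives the claimed upper bound.

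\textbf{Lower bound.} Set $T':=\begin{bmatrix}0 & T_2\\ T_1 & 0\end{bmatrix}$; Lemma~\ref{lem0001}(ii) says $w_B(T')=w_B(T)$. The identities $2T_+=T+T'$ and $2T_-=T-T'$ are immediate, so using the triangle inequality together with $w_B(-T')=w_B(T')$ (which follows at once from $|\langle -T'x,x\rangle_B|=|\langle T'x,x\rangle_B|$) I get
\[
\tfrac12 w_A(T_1+T_2)=w_B(T_+)\le \tfrac12\bigl(w_B(T)+w_B(T')\bigr)=w_B(T),
\]
and analogously $\tfrac12 w_A(T_1-T_2)=w_B(T_-)\le w_B(T)$. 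Taking the maximum of the two lower estimates finishes the argument. The only places requiring real care are the repeated sign-absorptions via Lemma~\ref{lem0001}(iii) and the tacit use of $T_\pm,T'\in\mathcal{B}_B(\mathcal{H}\oplus\mathcal{H})$, which follows routinely from $T_1,T_2\in\mathcal{B}_A(\mathcal{H})$ via direct-sum closure under taking $A$-adjoints.
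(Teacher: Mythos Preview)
The paper does not supply its own proof of this lemma; it is quoted verbatim as Theorem~4.3 of \cite{Pintu1} and used as a preliminary. Your observation that the displayed $B=\begin{bmatrix}0&A\\A&0\end{bmatrix}$ must be a misprint for $B=\operatorname{diag}(A,A)$ is correct and consistent with the rest of the paper.

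Your argument is sound. The decomposition $T=T_++T_-$ combined with Lemma~\ref{lem0001}(iii),(iv) gives $w_B(T_\pm)=\tfrac12 w_A(T_1\pm T_2)$, and the triangle inequality yields the upper bound. For the lower bound, writing $T_+=\tfrac12(T+T')$ and $T_-=\tfrac12(T-T')$ with $T'$ as you define it, and invoking Lemma~\ref{lem0001}(ii) to get $w_B(T')=w_B(T)$, is exactly the right move. Since there is no in-paper proof to compare against, there is nothing further to contrast; your route is the natural one and is essentially how the original reference \cite{Pintu1} proceeds as well.
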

In 2020, Feki \cite{Feki1} proved the following result.
\begin{lemma}\label{lem00003}\textnormal{[Lemma 2.1, \cite{Feki1}]} \\
Let $T=(T_{ij})_{n\times n}$ such that $T_{ij}\in \mathcal{B}_A(\mathcal{H})$ for all $i,j.$ Then 
$$\|T\|_A\leq \|\widehat{T}\|,$$
where $\widehat{T}=(\|T_{ij}\|_A)_{n\times n}.$
\end{lemma}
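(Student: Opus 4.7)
The plan is to prove this via a direct estimate of $\|Tx\|_{\mathbb{A}}$ on vectors $x \in \mathcal{H}^n$, comparing it with the action of the nonnegative scalar matrix $\widehat{T}$ on the vector of $A$-norms of the components of $x$. Here $\mathbb{A}$ denotes the block-diagonal operator $\mathrm{diag}(A, \ldots, A)$ on $\mathcal{H}^n$, so $\|x\|_{\mathbb{A}}^2 = \sum_{i=1}^n \|x_i\|_A^2$ for $x = (x_1, \ldots, x_n)^T$.

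The first step is to fix such an $x$ and compute the $i$th block of $Tx$, which is $\sum_{j=1}^n T_{ij} x_j$. Applying the triangle inequality for $\|\cdot\|_A$ together with the submultiplicativity $\|T_{ij} x_j\|_A \leq \|T_{ij}\|_A \|x_j\|_A$ (valid since each $T_{ij} \in \mathcal{B}_A(\mathcal{H})$, as recalled above), I obtain
\begin{equation*}
\left\|\sum_{j=1}^n T_{ij} x_j \right\|_A \leq \sum_{j=1}^n \|T_{ij}\|_A \|x_j\|_A.
\end{equation*}

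Next, set $u := (\|x_1\|_A, \ldots, \|x_n\|_A)^T \in \mathbb{R}^n$. Squaring and summing over $i$ yields
\begin{equation*}
\|Tx\|_{\mathbb{A}}^2 = \sum_{i=1}^n \left\|\sum_{j=1}^n T_{ij} x_j \right\|_A^2 \leq \sum_{i=1}^n \left(\sum_{j=1}^n \|T_{ij}\|_A \|x_j\|_A \right)^2 = \|\widehat{T} u\|^2,
\end{equation*}
where the last expression is the squared Euclidean norm of $\widehat{T}u \in \mathbb{R}^n$. Using $\|\widehat{T} u\| \leq \|\widehat{T}\| \|u\|$ and observing $\|u\|^2 = \sum_i \|x_i\|_A^2 = \|x\|_{\mathbb{A}}^2$, I conclude $\|Tx\|_{\mathbb{A}} \leq \|\widehat{T}\| \|x\|_{\mathbb{A}}$, and taking the supremum over $x \in \overline{\mathcal{R}(\mathbb{A})}$ with $\|x\|_{\mathbb{A}} = 1$ gives the desired bound.

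There is no substantive obstacle here; the only mild care needed is to keep the two different norms, the operator $A$-seminorm on $\mathcal{H}$ and the ordinary Euclidean operator norm on $\mathbb{R}^n$ used to define $\|\widehat{T}\|$, properly separated, and to justify that the $\|T_{ij}\|_A$ are finite (which is given by the assumption $T_{ij} \in \mathcal{B}_A(\mathcal{H}) \subseteq \mathcal{B}^A(\mathcal{H})$).
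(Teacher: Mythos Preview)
Your argument is correct and is the standard proof of this inequality. Note, however, that the paper does not supply its own proof of this lemma: it is merely quoted from \cite{Feki1} and used as a tool later on. So there is nothing in the paper to compare your approach against; you have simply filled in the omitted justification, and done so cleanly.
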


 \section{Main Results}
	This section is two fold.  First, we present some generalizations of $A$-numerical radius inequalities. Further we prove some upper and lower bounds for $B$-numerical radius  of operator matrices.  Second, we provide different refinements of $A$-numerical radius inequalities.
 	
 \subsection{Upper and lower bounds for $B$-numerical radius of $2\times 2$ operator matrix.}
 
 In this subsection, we establish different upper and lower bounds for $B$-numerical radius of a $2\times 2$ block operator matrix. We start with the following lemma. 
 \begin{lemma}\label{l001}
 Let $T_1, T_2, T_3, T_4\in \mathcal{B}_A(\mathcal{H}).$ Then
  \begin{enumerate}
        \item [\textnormal{(i)}] $w_B\left(\begin{bmatrix}
     T_1 & 0\\
     0 & T_4
     \end{bmatrix}\right)\leq w_B\left(\begin{bmatrix}
     T_1 & T_2\\
     T_3 & T_4
     \end{bmatrix}\right).$
      \item [\textnormal{(ii)}] $w_B\left(\begin{bmatrix}
     0 & T_2\\
     T_3 & 0
     \end{bmatrix}\right)\leq w_B\left(\begin{bmatrix}
     T_1 & T_2\\
     T_3 & T_4
     \end{bmatrix}\right).$
        \end{enumerate}
 \end{lemma}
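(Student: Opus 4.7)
The plan splits naturally: (i) is a direct test-vector argument together with Lemma~\ref{lem0001}(i), while (ii) exploits a simple diagonal sign-flip symmetry and the triangle inequality for $w_B(\cdot)$.

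For (i), by Lemma~\ref{lem0001}(i) it suffices to show $w_A(T_1)\leq w_B(T)$ and $w_A(T_4)\leq w_B(T)$. I would fix $x\in\mathcal{H}$ with $\|x\|_A=1$ and set $X=(x,0)^{\top}\in \mathcal{H}\oplus\mathcal{H}$. Then $\|X\|_B=\|x\|_A=1$, and a direct block computation gives $\langle TX,X\rangle_B=\langle T_1x,x\rangle_A$, whence $|\langle T_1x,x\rangle_A|\leq w_B(T)$. Taking the supremum over such $x$ yields $w_A(T_1)\leq w_B(T)$; the analogous argument with the test vector $(0,y)^{\top}$ gives $w_A(T_4)\leq w_B(T)$.

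For (ii), introduce $V=\begin{bmatrix} I & 0 \\ 0 & -I \end{bmatrix}$, which is selfadjoint and commutes with $B$, so that $\|Vx\|_B=\|x\|_B$ for every $x\in\mathcal{H}\oplus\mathcal{H}$. For any $x$ with $\|x\|_B=1$, a short calculation using $BV=VB$ and $V^{*}=V$ gives $\langle VTVx,x\rangle_B=\langle TVx,Vx\rangle_B$, and since $\|Vx\|_B=1$ this is bounded by $w_B(T)$; hence $w_B(VTV)\leq w_B(T)$. Block multiplication then yields
\begin{align*}
VTV=\begin{bmatrix} T_1 & -T_2 \\ -T_3 & T_4 \end{bmatrix},\qquad \tfrac{1}{2}(T-VTV)=\begin{bmatrix} 0 & T_2 \\ T_3 & 0 \end{bmatrix},
\end{align*}
and the triangle inequality for the seminorm $w_B$ produces
\begin{align*}
w_B\!\begin{bmatrix} 0 & T_2 \\ T_3 & 0 \end{bmatrix}\leq \tfrac{1}{2}\bigl(w_B(T)+w_B(VTV)\bigr)\leq w_B(T).
\end{align*}

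The least trivial step is the identity $\langle VTVx,x\rangle_B=\langle TVx,Vx\rangle_B$, which rests on the fact that $V$ commutes with $B$ (both are $2\times 2$ diagonal matrices with scalar blocks), so that $BV=V^{*}B$ can be used to transfer $V$ across the semi-inner product. Once this observation is in hand, both parts of the lemma reduce to routine block-matrix bookkeeping.
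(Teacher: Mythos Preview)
Your argument is correct. For part~(ii) it coincides with the paper's proof: the paper sets $U=\begin{bmatrix} I & 0 \\ 0 & -I \end{bmatrix}$, notes that $U$ is $B$-unitary so that $w_B(U^{\#_B}TU)=w_B(T)$, and applies the triangle inequality to $\tfrac{1}{2}(T-U^{\#_B}TU)$. Your identity $\langle VTVx,x\rangle_B=\langle TVx,Vx\rangle_B$ is precisely the hands-on verification of this $B$-unitary invariance (here $U^{\#_B}=U=V$).

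For part~(i) the routes genuinely diverge. The paper reuses the same averaging device, writing
\[
\begin{bmatrix} T_1 & 0 \\ 0 & T_4 \end{bmatrix}=\tfrac{1}{2}\bigl(T+U^{\#_B}TU\bigr)
\]
and again invoking $w_B(U^{\#_B}TU)=w_B(T)$ together with the triangle inequality. You instead test against the block vectors $(x,0)^\top$ and $(0,y)^\top$ to obtain $w_A(T_1),\,w_A(T_4)\leq w_B(T)$ directly, and then appeal to Lemma~\ref{lem0001}(i) to identify $w_B\bigl(\mathrm{diag}(T_1,T_4)\bigr)$ with $\max\{w_A(T_1),w_A(T_4)\}$. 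The paper's approach is more uniform --- one trick handles both parts, and it never needs the diagonal formula from Lemma~\ref{lem0001}(i). Your approach is slightly more elementary for (i) in isolation and makes the pointwise inequalities $w_A(T_j)\leq w_B(T)$ explicit, at the cost of importing Lemma~\ref{lem0001}(i) as an extra ingredient.
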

 
 \begin{proof}
 Let $ T= \begin{bmatrix}
     T_1 & T_2\\
     T_3 & T_4
     \end{bmatrix}$ and the $B$-unitary operator $U=\begin{bmatrix}
     I & 0\\
     0 & -I
     \end{bmatrix}.$\\
     Here, $
     \begin{bmatrix}
     T_1 & 0\\
     0 & T_4
     \end{bmatrix}=\frac{1}{2}(T+U^{\#_B}TU).$ So, we have \\
     (i)
     \begin{align*}
         w_B\left( \begin{bmatrix}
     T_1 & 0\\
     0 & T_4
     \end{bmatrix}\right)&= \frac{1}{2}w_B(T+U^{\#_B}TU)\\
     &\leq \frac{1}{2}[w_B(T)+w_B(U^{\#_B}TU)]\\
     &= \frac{1}{2}[w_B(T)+w_B(T)]\\
     &=w_B(T)=w_B\left(\begin{bmatrix}
     T_1 & T_2\\
     T_3 & T_4
     \end{bmatrix}\right).
     \end{align*} 
    (ii) 
      \begin{align*}
         w_B\left( \begin{bmatrix}
     0 & T_2\\
     T_3 & 0
     \end{bmatrix}\right)&= \frac{1}{2}w_B(T-U^{\#_B}TU)\\
     &\leq \frac{1}{2}[w_B(T)+w_B(U^{\#_B}TU)]\\
     &= \frac{1}{2}[w_B(T)+w_B(T)]\\
     &=w_B(T)=w_B\left(\begin{bmatrix}
     T_1 & T_2\\
     T_3 & T_4
     \end{bmatrix}\right).
     \end{align*} 
 \end{proof}

The following inequality generalizes  \eqref{ineq1}.

\begin{thm}
Let $T_1, T_2\in\mathcal{B}_A(\mathcal{H}),$ where $A>0.$
If $B=\begin{bmatrix}
A & 0\\
0 & A
\end{bmatrix}
,$ then 
\begin{equation}\label{eq01}
    \max\{w_A(T_1), w_A(T_2)\}\leq w_B\left(\begin{bmatrix}
T_1 & T_2\\
-T_2 & -T_1
\end{bmatrix}\right)\leq w_A(T_1)+w_A(T_2).
\end{equation}
\end{thm}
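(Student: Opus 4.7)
The plan is to treat the two bounds separately, and in both cases reduce to the building blocks already developed in Lemma \ref{lem0001} and Lemma \ref{l001}. The key observation is that the matrix in question splits naturally as
\[
\begin{bmatrix} T_1 & T_2\\ -T_2 & -T_1 \end{bmatrix}
=
\begin{bmatrix} T_1 & 0\\ 0 & -T_1 \end{bmatrix}
+
\begin{bmatrix} 0 & T_2\\ -T_2 & 0 \end{bmatrix},
\]
and both summands are of the special shapes whose $B$-numerical radius is already computed in Lemma \ref{lem0001}.

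For the upper bound I would invoke the subadditivity of $w_B(\cdot)$ (inherited from the definition, since $|\langle (X+Y)x,x\rangle_B|\leq |\langle Xx,x\rangle_B|+|\langle Yx,x\rangle_B|$) and then evaluate each piece. By Lemma \ref{lem0001}(i) the diagonal piece equals $\max\{w_A(T_1),w_A(-T_1)\}=w_A(T_1)$, since clearly $w_A(-T_1)=w_A(T_1)$. For the off-diagonal piece, Lemma \ref{lem0001}(iii) with $\theta=\pi$ shows
\[
w_B\!\left(\begin{bmatrix} 0 & T_2\\ -T_2 & 0 \end{bmatrix}\right)
=w_B\!\left(\begin{bmatrix} 0 & T_2\\ T_2 & 0 \end{bmatrix}\right)
=w_A(T_2),
\]
the last equality being the particular case of Lemma \ref{lem0001}(iv). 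Combining gives the required upper bound $w_A(T_1)+w_A(T_2)$.

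For the lower bound the idea is to apply Lemma \ref{l001} with $T_3=-T_2$ and $T_4=-T_1$. Part (i) of that lemma immediately gives
\[
w_A(T_1)=w_B\!\left(\begin{bmatrix} T_1 & 0\\ 0 & -T_1 \end{bmatrix}\right)
\leq w_B\!\left(\begin{bmatrix} T_1 & T_2\\ -T_2 & -T_1 \end{bmatrix}\right),
\]
while part (ii), together with the computation of the off-diagonal radius above, yields
\[
w_A(T_2)=w_B\!\left(\begin{bmatrix} 0 & T_2\\ -T_2 & 0 \end{bmatrix}\right)
\leq w_B\!\left(\begin{bmatrix} T_1 & T_2\\ -T_2 & -T_1 \end{bmatrix}\right).
\]
Taking the maximum of the two produces the left-hand inequality.

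I do not expect any real obstacle here: both bounds are essentially bookkeeping once the correct additive decomposition is chosen. The only subtle point worth writing out carefully is the sign change in the off-diagonal block, which requires the $\theta=\pi$ case of Lemma \ref{lem0001}(iii) (this is where the hypothesis $A>0$ is consumed) to convert $\begin{bmatrix} 0 & T_2\\ -T_2 & 0 \end{bmatrix}$ into $\begin{bmatrix} 0 & T_2\\ T_2 & 0 \end{bmatrix}$ so that the particular case of Lemma \ref{lem0001}(iv) applies.
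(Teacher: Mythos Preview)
Your argument is correct and essentially identical to the paper's: both split the matrix into its diagonal and off-diagonal parts, use Lemma~\ref{l001} for the lower bound and subadditivity of $w_B$ plus Lemma~\ref{lem0001} for the upper bound. You are simply more explicit than the paper about invoking Lemma~\ref{lem0001}(iii) with $\theta=\pi$ and the particular case of (iv) to identify $w_B\bigl(\begin{smallmatrix}0 & T_2\\ -T_2 & 0\end{smallmatrix}\bigr)=w_A(T_2)$.
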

\begin{proof}
By using Lemma \ref{lem0001} and Lemma \ref{l001}
$$w_A(T_1)= w_B\left(\begin{bmatrix}
T_1 & 0\\
0 & -T_1
\end{bmatrix}\right)\leq w_B\left(\begin{bmatrix}
T_1 & T_2\\
-T_2 & -T_1
\end{bmatrix}\right)$$
and
$$w_A(T_2)= w_B\left(\begin{bmatrix}
0 & T_2\\
-T_2 & 0
\end{bmatrix}\right)\leq w_B\left(\begin{bmatrix}
T_1 & T_2\\
-T_2 & -T_1
\end{bmatrix}\right).$$
Therefore, $$\max\{w_A(T_1), w_A(T_2)\}\leq w_B\left(\begin{bmatrix}
T_1 & T_2\\
-T_2 & -T_1
\end{bmatrix}\right).$$
On the other hand, by using Lemma \ref{lem0001}, we have 
$$w_B\left(\begin{bmatrix}
T_1 & T_2\\
-T_2 & -T_1
\end{bmatrix}\right)\leq w_B\left(\begin{bmatrix}
T_1 & 0\\
0 & -T_1
\end{bmatrix}\right)+w_B\left(\begin{bmatrix}
0 & T_2\\
-T_2 & 0
\end{bmatrix}\right)=w_A(T_1)+w_A(T_2).$$
\end{proof}
A particular case of the inequality \eqref{eq01} is the following.

\begin{remark}
If we choose $T_2=T_1$ in inequality \eqref{eq01}, then 
$$w_A(T_1)\leq w_B\left(\begin{bmatrix}
T_1 & T_1\\
-T_1 & -T_1
\end{bmatrix}\right)\leq 2w_A(T_1).$$
\end{remark}
We need the following lemma to prove Theorem \ref{t002}.

\begin{lemma}\label{l002}
Let $T_1, T_2, T_3, T_4\in \mathcal{B}_A(\mathcal{H}),$ where $A>0.$ If $B=\begin{bmatrix}
A & 0\\
0 & A
\end{bmatrix},$ then
$$ w_B\left(\begin{bmatrix}
T_2 & -T_1\\
T_1 & T_2
\end{bmatrix}\right)= \max\{w_A(T_1+iT_2), w_A(T_1-iT_2)\}.$$
\end{lemma}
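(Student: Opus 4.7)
The plan is to diagonalize the block matrix $M = \begin{bmatrix} T_2 & -T_1\\ T_1 & T_2 \end{bmatrix}$ by a $B$-unitary similarity, reducing to the direct-sum case covered by Lemma \ref{lem0001}(i). The natural candidate is
\[
U = \frac{1}{\sqrt{2}}\begin{bmatrix} I & I\\ iI & -iI \end{bmatrix},
\]
which is the block analogue of the unitary that diagonalizes a $2\times 2$ scalar matrix of this form. I would first check that $U$ is $B$-unitary: writing $x = \begin{bmatrix} x_1\\ x_2\end{bmatrix}$, a short expansion gives
\[
\|Ux\|_B^2 = \tfrac12\langle A(x_1+x_2), x_1+x_2\rangle + \tfrac12\langle A(x_1-x_2), x_1-x_2\rangle = \langle Ax_1,x_1\rangle + \langle Ax_2,x_2\rangle = \|x\|_B^2,
\]
and the analogous computation for $U^{\#_B}$ (which has the same form up to conjugation of the $i$'s) shows $\|U^{\#_B}x\|_B = \|x\|_B$.

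Next I would perform the conjugation $U^{\#_B} M U$ explicitly. The blockwise multiplication collapses the off-diagonal entries through the identities $(T_2 - iT_1) + i(-T_1 - iT_2) = 2T_2 - 2iT_1$ etc., producing
\[
U^{\#_B} M U = \begin{bmatrix} T_2 - iT_1 & 0\\ 0 & T_2 + iT_1 \end{bmatrix}.
\]
Since $w_B$ is invariant under $B$-unitary conjugation (stated in the introduction), applying Lemma \ref{lem0001}(i) gives
\[
w_B(M) = w_B(U^{\#_B} M U) = \max\{ w_A(T_2 - iT_1),\, w_A(T_2 + iT_1)\}.
\]

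Finally, I would use the fact that $w_A(\cdot)$ is invariant under multiplication by unimodular scalars (immediate from its definition, since $|\langle e^{i\theta}Tx,x\rangle_A| = |\langle Tx,x\rangle_A|$). Writing $T_2 - iT_1 = -i(T_1 + iT_2)$ and $T_2 + iT_1 = i(T_1 - iT_2)$ yields $w_A(T_2 \mp iT_1) = w_A(T_1 \pm iT_2)$, matching the claimed right-hand side. The only real obstacle is being careful with the block algebra when computing $U^{\#_B} M U$, together with justifying $B$-unitarity rigorously in the semi-Hilbertian setting; both are routine once $A > 0$ is in force.
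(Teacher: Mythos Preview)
Your proof is correct and follows essentially the same strategy as the paper: diagonalize the block matrix by a $B$-unitary conjugation and then apply Lemma~\ref{lem0001}(i). The only cosmetic difference is that the paper chooses $U=\tfrac{1}{\sqrt{2}}\begin{bmatrix} I & iI\\ iI & I\end{bmatrix}$ and first diagonalizes the auxiliary matrix $\begin{bmatrix} iT_2 & -T_1\\ T_1 & iT_2\end{bmatrix}$ before substituting $T_2\mapsto -iT_2$, whereas you pick a different unitary and diagonalize $M$ directly---your route is slightly more streamlined but the underlying idea is identical.
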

\begin{proof}
Let $T=\begin{bmatrix}
iT_2 & -T_1\\
T_1 & iT_2
\end{bmatrix}$ and the $B$-unitary operator  $U=\frac{1}{\sqrt{2}}\begin{bmatrix}
I & iI\\
iI & I
\end{bmatrix}.$ Then $U^{\#_B}TU=\begin{bmatrix}
-i(T_1-T_2) & 0\\
0 & i(T_1+T_2)
\end{bmatrix}.$ Using the fact that $w_B(T)=w_B(U^{\#_B}TU),$
we get
\begin{align*}
    w_B(T)=w_B(U^{\#_B}TU)&=
w_B\left(\begin{bmatrix}
-i(T_1-T_2) & 0\\
0 & i(T_1+T_2)
\end{bmatrix}\right)\\
&=\max\{w_A(-i(T_1-T_2)),w_A(i(T_1+T_2))\}\\ &=\max\{w_A(T_1-T_2),w_A(T_1+T_2)\}.
\end{align*}
Replacing $T_2$ by $-iT_2$ in the identity, we have
$$
w_B\left(\begin{bmatrix}
T_2 & -T_1\\
T_1 & T_2
\end{bmatrix}\right)
=\max\{w_A(T_1+iT_2),w_A(T_1-iT_2)\}.$$
\end{proof}
Theorem \ref{t002} provides an upper bound for a block operator matrix of the form $\begin{bmatrix}
T_1 & T_2\\
T_3 & T_4
\end{bmatrix}.$
\begin{thm}\label{t002}
Let $T_1, T_2, T_3, T_4\in \mathcal{B}_A(\mathcal{H}),$ where $A>0.$ If $T=\begin{bmatrix}
T_1 & T_2\\
T_3 & T_4
\end{bmatrix}$ and $B=\begin{bmatrix}
A & 0\\
0 & A
\end{bmatrix}.$ Then
\begin{align*}
 w_B(T)\leq \max\bigg\{\frac{1}{2}w_A(T_1+T_4+i(T_2-T_3)),\frac{1}{2}w_A(T_1+&T_4-i(T_2-T_3))\bigg\}  \\ &+\frac{1}{2}(w_A(T_4-T_1)+w_A(T_2+T_3)).  
\end{align*}

\end{thm}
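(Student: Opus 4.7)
The plan is to decompose $T$ as a sum $T = M_1 + M_2$, where $M_1$ fits the template of Lemma~\ref{l002} and $M_2$ splits further into a diagonal block (controlled by Lemma~\ref{lem0001}(i)) plus a symmetric off-diagonal block (controlled by the particular case of Lemma~\ref{lem0001}(iv)). The split I would use is
\[
M_1 = \begin{bmatrix} \tfrac{T_1+T_4}{2} & \tfrac{T_2-T_3}{2} \\[2pt] \tfrac{T_3-T_2}{2} & \tfrac{T_1+T_4}{2} \end{bmatrix}, \qquad
M_2 = \begin{bmatrix} \tfrac{T_1-T_4}{2} & \tfrac{T_2+T_3}{2} \\[2pt] \tfrac{T_2+T_3}{2} & \tfrac{T_4-T_1}{2} \end{bmatrix},
\]
and a quick entry-by-entry check confirms $M_1 + M_2 = T$. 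Since $w_B(\cdot)$ is subadditive (being the supremum of absolute values of sesquilinear forms), this gives $w_B(T) \leq w_B(M_1) + w_B(M_2)$, so it remains to bound each piece.

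For $M_1$, I would invoke Lemma~\ref{l002} with the substitutions $T_1 \mapsto \tfrac{T_3-T_2}{2}$ and $T_2 \mapsto \tfrac{T_1+T_4}{2}$; the template matches exactly, giving
\[
w_B(M_1) = \max\!\left\{ w_A\!\left(\tfrac{T_3-T_2}{2} + i\,\tfrac{T_1+T_4}{2}\right),\ w_A\!\left(\tfrac{T_3-T_2}{2} - i\,\tfrac{T_1+T_4}{2}\right) \right\}.
\]
Using the scalar invariance $w_A(\lambda X) = |\lambda|\,w_A(X)$ (in particular $w_A(iX) = w_A(X)$), I can factor out $\pm i$ from each argument and rewrite the two quantities as $\tfrac12 w_A(T_1+T_4+i(T_2-T_3))$ and $\tfrac12 w_A(T_1+T_4-i(T_2-T_3))$, matching the first summand on the right-hand side of the claim.

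For $M_2$, I would split once more,
\[
M_2 = \begin{bmatrix} \tfrac{T_1-T_4}{2} & 0 \\[2pt] 0 & \tfrac{T_4-T_1}{2} \end{bmatrix} + \begin{bmatrix} 0 & \tfrac{T_2+T_3}{2} \\[2pt] \tfrac{T_2+T_3}{2} & 0 \end{bmatrix},
\]
apply subadditivity of $w_B$ a second time, and then use Lemma~\ref{lem0001}(i) on the diagonal block to get $\tfrac12 w_A(T_4-T_1)$ (invoking $w_A(-X) = w_A(X)$) and the particular case of Lemma~\ref{lem0001}(iv) on the symmetric off-diagonal block to get $\tfrac12 w_A(T_2+T_3)$. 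Adding the two resulting bounds for $w_B(M_1)$ and $w_B(M_2)$ produces exactly the stated inequality.

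The only genuine obstacle is guessing the decomposition $T = M_1 + M_2$; once that split is written down, the remainder is bookkeeping with lemmas already available in the excerpt together with the elementary fact that $w_A$ is invariant under multiplication by unit complex scalars. Heuristically, $M_1$ is designed to be the ``equal-diagonal, skew-off-diagonal'' part that Lemma~\ref{l002} diagonalises via a single $B$-unitary, while $M_2$ is the complementary ``opposite-diagonal, symmetric-off-diagonal'' part, which resists that trick and must be bounded via the triangle inequality.
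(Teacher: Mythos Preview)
Your proof is correct and follows essentially the same strategy as the paper: split $T$ into a piece of the form covered by Lemma~\ref{l002} plus a remainder that is handled by Lemma~\ref{lem0001}(i) and the particular case of Lemma~\ref{lem0001}(iv). The paper first conjugates $T$ by the $B$-unitary $U=\tfrac{1}{\sqrt{2}}\begin{bmatrix} I & -I\\ I & I\end{bmatrix}$ before performing the identical decomposition, but this preliminary conjugation is not needed for the argument; your direct split $T=M_1+M_2$ arrives at the same two summands and the same bounds more cleanly.
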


\begin{proof}
Let $U=\frac{1}{\sqrt{2}}\begin{bmatrix}
I & -I\\
I & I
\end{bmatrix}$ be $B$-unitary. Using the identity $w_B(T)=w_B(U^{\#_B}TU)$, we have
\begin{align*}
    w_B\left(\begin{bmatrix}
T_1 & T_2\\
T_3 & T_4
\end{bmatrix}\right)=& w_B\left(U^{\#_B}\begin{bmatrix}
T_1 & T_2\\
T_3 & T_4
\end{bmatrix}U\right)\\
&=\frac{1}{2}w_B\left(\begin{bmatrix}
T_1+T_2+T_3+T_4 & -T_1+T_2-T_3+T_4\\
-T_1-T_2+T_3+T_4 & T_1-T_2-T_3+T_4
\end{bmatrix}\right)\\
&=\frac{1}{2}w_B\left(\begin{bmatrix}
T_1+T_4 & T_2-T_3\\
T_3-T_2 & T_1+T_4
\end{bmatrix}+\begin{bmatrix}
T_2+T_3 & T_4-T_1\\
T_4-T_1 & -T_3-T_2
\end{bmatrix}\right)\\
&\leq \frac{1}{2}\left\{w_B\left(\begin{bmatrix}
T_1+T_4 & T_2-T_3\\
T_3-T_2 & T_1+T_4
\end{bmatrix}\right)+w_B\left(\begin{bmatrix}
T_2+T_3 & T_4-T_1\\
T_4-T_1 & -T_3-T_2
\end{bmatrix}\right)\right\}\\
&\leq \frac{1}{2}\{\max(w_A(T_3-T_2+i(T_1+T_4))),w_A(T_3-T_2-i(T_1+T_4))\\ &\hspace{2.5cm}+w_A(T_4-T_1)+w_A(T_2+T_3)\}\mbox{~~by Lemma \ref{l002} and Lemma \ref{lem0001}.}
\end{align*}
\end{proof}

The following result demonstrates an upper bound for $B$-numerical radius of a $2\times 2$ operator matrix.
\begin{thm}
Let $T_1,T_2,T_3,T_4\in \mathcal{B}_A(\mathcal{H}),$ where $A>0.$ If $B=\begin{bmatrix}
A & 0\\
0 & A
\end{bmatrix}.$ Then
$$w_B\left(\begin{bmatrix}
T_1 & T_2\\
T_3 & T_4
\end{bmatrix}\right)\leq \max\{w_A(T_1), w_A(T_4)\}+\frac{w_A(T_2+T_3)+w_A(T_2-T_3)}{2}.$$
\end{thm}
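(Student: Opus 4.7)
The plan is to decompose the block operator matrix into its diagonal and off-diagonal parts,
$$\begin{bmatrix} T_1 & T_2 \\ T_3 & T_4 \end{bmatrix} = \begin{bmatrix} T_1 & 0 \\ 0 & T_4 \end{bmatrix} + \begin{bmatrix} 0 & T_2 \\ T_3 & 0 \end{bmatrix},$$
and then invoke the subadditivity of $w_B$, which is immediate from the definition $w_B(\cdot) = \sup_{\|x\|_B=1} |\langle \cdot\, x, x\rangle_B|$ and the triangle inequality for the modulus. The diagonal summand is handled directly by Lemma \ref{lem0001}(i), giving $\max\{w_A(T_1), w_A(T_4)\}$, which is exactly the first term of the claimed bound.

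For the off-diagonal summand, the plan is a further symmetric/antisymmetric splitting. Writing $T_2 = \tfrac12[(T_2+T_3)+(T_2-T_3)]$ and $T_3 = \tfrac12[(T_2+T_3)-(T_2-T_3)]$, we get
$$\begin{bmatrix} 0 & T_2 \\ T_3 & 0 \end{bmatrix} = \frac{1}{2}\begin{bmatrix} 0 & T_2+T_3 \\ T_2+T_3 & 0 \end{bmatrix} + \frac{1}{2}\begin{bmatrix} 0 & T_2-T_3 \\ -(T_2-T_3) & 0 \end{bmatrix}.$$
Applying subadditivity of $w_B$ again, the first matrix on the right contributes $\tfrac12 w_A(T_2+T_3)$ via the ``in particular'' clause of Lemma \ref{lem0001}(iv). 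For the second matrix, I would apply Lemma \ref{lem0001}(iii) with $\theta = \pi$ (so that $e^{i\theta} = -1$) to absorb the minus sign, turning it into $\begin{bmatrix} 0 & T_2-T_3 \\ T_2-T_3 & 0 \end{bmatrix}$, and then invoke Lemma \ref{lem0001}(iv) once more to obtain $\tfrac12 w_A(T_2-T_3)$.

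Adding the two contributions yields the upper bound
$$\max\{w_A(T_1), w_A(T_4)\} + \frac{w_A(T_2+T_3) + w_A(T_2-T_3)}{2},$$
as required. The main point (and really the only subtle step) is the use of Lemma \ref{lem0001}(iii) with $\theta=\pi$ to neutralize the sign in the antisymmetric block; everything else is subadditivity of $w_B$ and direct appeals to Lemma \ref{lem0001}. I do not foresee any serious obstacle, since $A>0$ ensures both that $B>0$ and that all hypotheses of the parts of Lemma \ref{lem0001} we need are satisfied.
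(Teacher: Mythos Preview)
Your argument is correct, but it is not the route taken in the paper. The paper first conjugates $T$ by the $B$-unitary $U=\tfrac{1}{\sqrt{2}}\begin{bmatrix} I & -I\\ I & I\end{bmatrix}$ (the same device used in the preceding theorem), obtaining
\[
w_B(T)=\tfrac{1}{2}\,w_B\!\left(\begin{bmatrix} T_1+T_4 & T_4-T_1\\ T_4-T_1 & T_1+T_4\end{bmatrix}+\begin{bmatrix} T_2+T_3 & T_2-T_3\\ T_3-T_2 & -T_3-T_2\end{bmatrix}\right),
\]
and then applies subadditivity together with Lemma~\ref{lem0001}(iv) to each summand. Your proof bypasses the conjugation entirely: you split $T$ directly into its diagonal and off-diagonal parts and handle them via Lemma~\ref{lem0001}(i) and a further symmetric/antisymmetric splitting of the off-diagonal block. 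This is more elementary and in fact your treatment of the off-diagonal block is exactly the upper bound of Lemma~\ref{lem0002}, so you could shorten your argument by simply citing that lemma for $\begin{bmatrix}0 & T_2\\ T_3 & 0\end{bmatrix}$. The paper's conjugation approach has the virtue of reusing machinery already set up for Theorem~\ref{t002}, while your approach makes the structure of the bound (diagonal plus off-diagonal contributions) more transparent.
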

\begin{proof}
Using similar argument as used in the previous theorem, we have
\begin{align*}
    w_B\left(\begin{bmatrix}
T_1 & T_2\\
T_3 & T_4
\end{bmatrix}\right)&=\frac{1}{2}w_B\left(\begin{bmatrix}
T_1+T_4 & T_4-T_1\\
T_4-T_1 & T_1+T_4
\end{bmatrix}+\begin{bmatrix}
T_2+T_3 & T_2-T_3\\
T_3-T_2 & -T_3-T_2
\end{bmatrix}\right)\\
&\leq \frac{1}{2}\left[w_B\left(\begin{bmatrix}
T_1+T_4 & T_4-T_1\\
T_4-T_1 & T_1+T_4
\end{bmatrix}\right)+w_B\left(\begin{bmatrix}
T_2+T_3 & T_2-T_3\\
T_3-T_2 & -T_3-T_2
\end{bmatrix}\right)\right]\\
&\leq \frac{1}{2}\max\{w_A(T_1+T_4+T_4-T_1),w_A(T_1+T_4-T_4+T_1)\}\\
& \hspace{4cm}+\frac{1}{2}\{w_A(T_2+T_3)+w_A(T_2-T_3)\}\mbox{ by Lemma \ref{lem0001}(iv)}\\
&=\max\{w_A(T_1),w_A(T_4)\}+\frac{w_A(T_2+T_3)+w_A(T_2-T_3)}{2}.
\end{align*}
\end{proof}

The following result is an estimate of an lower bound for $B$-numerical radius  of a $2\times 2$ operator matrix.
\begin{thm}
Let $T_1, T_2, T_3,T_4\in \mathcal{B}_A(\mathcal{H}),$ where $A>0.$ If 
$B=\begin{bmatrix}
A & 0\\
0 & A
\end{bmatrix},$  
 then
\begin{align*}
  w_B\left(\begin{bmatrix}
T_1 & T_2\\
T_3 & T_4
\end{bmatrix}\right)\geq \max\left\{w_A(T_1), w_A(T_4)),\frac{w_A(T_2+T_3)}{2}, \frac{w_A(T_2-T_3)}{2}\right\}.
\end{align*}
\end{thm}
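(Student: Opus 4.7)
My plan is to produce four separate lower bounds for $w_B(T)$, one for each term appearing inside the maximum, and then take the maximum. The first two are immediate: by Lemma \ref{l001}(i) combined with Lemma \ref{lem0001}(i),
\[
w_B(T) \ge w_B\left(\begin{bmatrix} T_1 & 0 \\ 0 & T_4 \end{bmatrix}\right) = \max\{w_A(T_1), w_A(T_4)\},
\]
so the two bounds $w_B(T) \ge w_A(T_1)$ and $w_B(T) \ge w_A(T_4)$ are in hand.

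For the remaining two bounds, I would first apply Lemma \ref{l001}(ii) to reduce the question to $w_B(T) \ge w_B\left(\begin{bmatrix} 0 & T_2 \\ T_3 & 0 \end{bmatrix}\right)$, and then show separately that this off-diagonal $B$-numerical radius dominates each of $\frac{1}{2}w_A(T_2\pm T_3)$. The pivotal device is the $B$-unitary
\[
U = \frac{1}{\sqrt{2}}\begin{bmatrix} I & -I \\ I & I \end{bmatrix},
\]
which is $B$-unitary in the same sense already used in the proofs of Theorem \ref{t002} and Lemma \ref{l002}. Invoking the invariance $w_B(\cdot) = w_B(U^{\#_B}(\cdot)U)$ recalled in the introduction and multiplying out gives
\[
U^{\#_B}\begin{bmatrix} 0 & T_2 \\ T_3 & 0 \end{bmatrix}U = \frac{1}{2}\begin{bmatrix} T_2+T_3 & T_2-T_3 \\ T_3-T_2 & -(T_2+T_3) \end{bmatrix}.
\]
Applying Lemma \ref{l001}(i) to this transformed matrix and then Lemma \ref{lem0001}(i) yields the bound $\frac{1}{2}w_A(T_2+T_3)$. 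Applying instead Lemma \ref{l001}(ii), then using Lemma \ref{lem0001}(iii) with $\theta = \pi$ to absorb the sign flip between $T_3-T_2$ and $T_2-T_3$, and finally invoking the particular case of Lemma \ref{lem0001}(iv), yields the bound $\frac{1}{2}w_A(T_2-T_3)$.

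Combining the four inequalities by taking the maximum on the right-hand side completes the proof. The only step that is not a direct mechanical application of an earlier lemma is the choice of $U$; verifying the required $B$-unitarity reduces to a short block multiplication because every block of $U$ is a scalar multiple of $I$, and the invariance of $w_B$ under $B$-unitary congruence has already been used in the same spirit in the preceding theorems, so no substantial new difficulty is expected to arise.
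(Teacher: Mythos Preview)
Your proof is correct and follows the paper's strategy for the diagonal part: Lemma~\ref{l001}(i) followed by Lemma~\ref{lem0001}(i) gives $w_B(T)\ge\max\{w_A(T_1),w_A(T_4)\}$, exactly as in the paper. For the off-diagonal bounds the paper simply invokes Lemma~\ref{lem0002} (the lower bound of Theorem~4.3 in \cite{Pintu1}) to obtain $w_B\bigl(\begin{smallmatrix}0&T_2\\T_3&0\end{smallmatrix}\bigr)\ge\tfrac12\max\{w_A(T_2+T_3),w_A(T_2-T_3)\}$ in one stroke. You instead re-derive that lower bound from scratch: conjugating by the $B$-unitary $U=\tfrac{1}{\sqrt2}\bigl(\begin{smallmatrix}I&-I\\I&I\end{smallmatrix}\bigr)$ and then applying Lemma~\ref{l001}(i),(ii) together with Lemma~\ref{lem0001}(i),(iii),(iv). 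Both routes are valid; yours is more self-contained (it never leaves the toolkit developed in this paper), while the paper's is shorter because it outsources the off-diagonal estimate to an already-stated preliminary lemma. In effect your argument reproves the lower-bound half of Lemma~\ref{lem0002}.
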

\begin{proof}
It follows from Lemma \ref{l001} that 
\begin{align*}
    w_B\left(\begin{bmatrix}
T_1 & T_2\\
T_3 & T_4
\end{bmatrix}\right) & \geq \max \left\{w_B\left(\begin{bmatrix}
T_1 & 0\\
0 & T_4
\end{bmatrix}\right), w_B\left(\begin{bmatrix}
0 & T_2\\
T_3 & 0
\end{bmatrix}\right)\right\}\\
&=\max\left\{\max\{w_A(T_1), w_A(T_4)\}, w_B\left(\begin{bmatrix}
0 & T_2\\
T_3 & 0
\end{bmatrix}\right)\right\}\\
& \geq \max\left\{\max\{w_A(T_1), w_A(T_4)\}, \frac{\max(w_A(T_2+T_3),w_A(T_2-T_3))}{2}\right\} \mbox{by Lemma \ref{lem0002}}\\
&=\max\left\{w_A(T_1), w_A(T_4), \frac{w_A(T_2+T_3)}{2},\frac{w_A(T_2-T_3)}{2}\right\}.\\
& \hspace{7cm}
\end{align*}
\end{proof}
To prove the next lemma, we need the following identities,   
\begin{equation}\label{eq001}
    \frac{a+b}{2}=\max(a,b)-\frac{|a-b|}{2}
\end{equation}
and
\begin{equation}\label{eq002}
    \frac{a+b}{2}=\min(a,b)+\frac{|a-b|}{2},
\end{equation}
for any two real numbers $a$ and $b$.
\begin{lemma}
    Let $T_1,T_2\in \mathcal{B}_A(\mathcal{H}).$ Then
    \begin{align*}
\max(\|T_1&+T_2\|_A^2, \|T_1-T_2\|_A^2)\\ &\leq \min(\|T_1^{\#_A}T_1+T_2^{\#_A}T_2\|_A+\|T_1^{\#_A}T_2+ T_2^{\#_A}T_1\|_A, \|T_1T_1^{\#_A}+T_2T_2^{\#_A}\|_A+\| T_1T_2^{\#_A}+T_2T_1^{\#_A}\|_A)
\end{align*}
and
\begin{align*}
\min(\|T_1&+T_2\|_A^2, \|T_1-T_2\|_A^2)\\ &\geq \max(\|T_1^{\#_A}T_1+T_2^{\#_A}T_2\|_A-\|T_1^{\#_A}T_2+ T_2^{\#_A}T_1)\|_A, \|T_1T_1^{\#_A}+T_2T_2^{\#_A}\|_A-\| T_1T_2^{\#_A}+T_2T_1^{\#_A}\|_A).
\end{align*}
\end{lemma}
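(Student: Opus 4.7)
The plan is to reduce everything to the basic identity $\|S\|_A^2 = \|S^{\#_A}S\|_A = \|SS^{\#_A}\|_A$ recorded in \eqref{ineq0}, and then invoke the triangle and reverse triangle inequalities for $\|\cdot\|_A$. Indeed, setting $S = T_1 \pm T_2$ and using $(T_1 \pm T_2)^{\#_A} = T_1^{\#_A} \pm T_2^{\#_A}$, I expand
$$(T_1 \pm T_2)^{\#_A}(T_1 \pm T_2) = (T_1^{\#_A}T_1 + T_2^{\#_A}T_2) \pm (T_1^{\#_A}T_2 + T_2^{\#_A}T_1),$$
and similarly
$$(T_1 \pm T_2)(T_1 \pm T_2)^{\#_A} = (T_1T_1^{\#_A} + T_2T_2^{\#_A}) \pm (T_1T_2^{\#_A} + T_2T_1^{\#_A}).$$

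For the upper bound, abbreviate $a = \|T_1^{\#_A}T_1 + T_2^{\#_A}T_2\|_A$ and $b = \|T_1^{\#_A}T_2 + T_2^{\#_A}T_1\|_A$. By the triangle inequality applied to the first expansion,
$$\|T_1 + T_2\|_A^2 \leq a + b \quad \text{and} \quad \|T_1 - T_2\|_A^2 \leq a + b,$$
so their maximum is bounded by $a+b$. Repeating the argument with the second expansion produces the analogous bound $\|T_1T_1^{\#_A} + T_2T_2^{\#_A}\|_A + \|T_1T_2^{\#_A} + T_2T_1^{\#_A}\|_A$, and taking the minimum of the two upper bounds completes the first inequality.

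For the lower bound, I apply the reverse triangle inequality $\|u \pm v\|_A \geq \|u\|_A - \|v\|_A$ to the same two expansions. From the $S^{\#_A}S$ expansion,
$$\|T_1 + T_2\|_A^2 \geq a - b \quad \text{and} \quad \|T_1 - T_2\|_A^2 \geq a - b,$$
so the minimum is at least $a - b$; the $SS^{\#_A}$ expansion yields the analogous estimate with $a,b$ replaced by $\|T_1T_1^{\#_A} + T_2T_2^{\#_A}\|_A$ and $\|T_1T_2^{\#_A} + T_2T_1^{\#_A}\|_A$, and taking the maximum of the two lower bounds finishes the proof.

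I do not expect any genuine obstacle here: once one writes the expansions, both halves of the statement are immediate from the triangle inequality and its reverse form, combined with the operator identity in \eqref{ineq0}. The only care required is algebraic — correctly using $(T_1 \pm T_2)^{\#_A} = T_1^{\#_A} \pm T_2^{\#_A}$ (which is valid since $\#_A$ is conjugate-linear in the appropriate sense on $\mathcal{B}_A(\mathcal{H})$) and grouping the four cross terms into the ``diagonal'' pair $T_i^{\#_A}T_i$ and the ``off-diagonal'' pair $T_i^{\#_A}T_j$ ($i \neq j$). The identities \eqref{eq001} and \eqref{eq002} provide the elementary reformulation of $\max$ and $\min$ used implicitly when passing from the two individual estimates to the single statement involving $\max/\min$.
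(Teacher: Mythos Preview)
Your argument is correct and complete. The paper itself does not supply a proof of this lemma --- it is stated without proof between the identities \eqref{eq001}--\eqref{eq002} and Lemma~\ref{lem0004} --- so there is nothing to compare against beyond confirming that your approach (expand $(T_1\pm T_2)^{\#_A}(T_1\pm T_2)$ and $(T_1\pm T_2)(T_1\pm T_2)^{\#_A}$ via \eqref{ineq0}, then apply the triangle and reverse triangle inequalities for the seminorm $\|\cdot\|_A$) is the natural one and almost certainly what the authors intended. One minor remark: you do not actually need the identities \eqref{eq001} and \eqref{eq002} here, since once both of $\|T_1+ T_2\|_A^2$ and $\|T_1- T_2\|_A^2$ are bounded above (respectively below) by the same quantity, that bound applies trivially to their maximum (respectively minimum).
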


\begin{lemma}\label{lem0004}
Let $T_1, T_2\in \mathcal{B}_A(\mathcal{H}).$ Then 
\begin{align*}
     \max(\|T_1+T_2\|_A^2, \|T_1-T_2\|_A^2)\geq \max(\|T_1^2+T_2^2\|_A, &\|T_1^{\#_A}T_1+T_2^{\#_A}T_2\|_A,\|T_1T_1^{\#_A}+T_2T_2^{\#_A}\|_A)\\
    &\hspace{3cm}+\frac{|\|T_1+T_2\|_A^2-\|T_1-T_2\|_A^2|}{2}.
\end{align*}
and
\begin{align*}
    \min(\|T_1+T_2\|_A^2, \|T_1-T_2\|_A^2)\geq \max(\|T_1^2+T_2^2\|_A, &\|T_1^{\#_A}T_1+T_2^{\#_A}T_2\|_A,\|T_1T_1^{\#_A}+T_2T_2^{\#_A}\|_A)\\
    &\hspace{3cm}-\frac{|\|T_1+T_2\|_A^2-\|T_1-T_2\|_A^2|}{2}.
\end{align*}
\end{lemma}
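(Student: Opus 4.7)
The plan is to reduce both inequalities in the lemma to a single auxiliary bound, namely
\[
M := \tfrac{1}{2}\bigl(\|T_1+T_2\|_A^2+\|T_1-T_2\|_A^2\bigr) \;\geq\; \max\!\Bigl(\|T_1^2+T_2^2\|_A,\; \|T_1^{\#_A}T_1+T_2^{\#_A}T_2\|_A,\; \|T_1 T_1^{\#_A}+T_2 T_2^{\#_A}\|_A\Bigr).
\]
Setting $a=\|T_1+T_2\|_A^2$ and $b=\|T_1-T_2\|_A^2$, the identities \eqref{eq001}--\eqref{eq002} read $\max(a,b)=\tfrac{a+b}{2}+\tfrac{|a-b|}{2}$ and $\min(a,b)=\tfrac{a+b}{2}-\tfrac{|a-b|}{2}$, so once $M$ dominates the maximum of the three candidate operator norms, the two stated inequalities fall out simultaneously.

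To obtain the auxiliary bound I would treat each of the three candidate operators separately, all starting from the pointwise parallelogram identity of the semi-inner product $\langle\cdot,\cdot\rangle_A$. For $T_1^{\#_A}T_1+T_2^{\#_A}T_2$, which is $A$-positive, its $A$-norm equals $\sup_{\|x\|_A=1}(\|T_1 x\|_A^2+\|T_2 x\|_A^2)$; the identity $\|(T_1+T_2)x\|_A^2+\|(T_1-T_2)x\|_A^2=2(\|T_1 x\|_A^2+\|T_2 x\|_A^2)$ combined with the trivial upper bound $\|(T_1\pm T_2)x\|_A\le \|T_1\pm T_2\|_A$ for $\|x\|_A=1$ and passage to the supremum then yields the required inequality. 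For $T_1 T_1^{\#_A}+T_2 T_2^{\#_A}$, I would repeat the same argument with the vectors $T_i^{\#_A}x$ in place of $T_ix$, exploiting $\|T_i^{\#_A}x\|_A^2=\langle T_i T_i^{\#_A}x,x\rangle_A$ and the norm equality $\|T_1^{\#_A}\pm T_2^{\#_A}\|_A=\|T_1\pm T_2\|_A$ that follows from \eqref{ineq0}. For $T_1^2+T_2^2$, the algebraic identity $2(T_1^2+T_2^2)=(T_1+T_2)^2+(T_1-T_2)^2$ together with the submultiplicativity $\|ST\|_A\le\|S\|_A\|T\|_A$ delivers the bound in a single line.

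The main obstacle is conceptual rather than computational: one must recognize that three seemingly unrelated quantities admit the common envelope $M$, and that this envelope is exactly what the identities \eqref{eq001}--\eqref{eq002} need to produce both the $\max$ and the $\min$ statement at once. A minor bookkeeping point in the $T_1T_1^{\#_A}+T_2T_2^{\#_A}$ step is to route everything through $\|T_i^{\#_A}x\|_A^2=\langle T_iT_i^{\#_A}x,x\rangle_A$ so that the projection $P$ onto $\overline{\mathcal{R}(A)}$, which would otherwise appear via $(T_i^{\#_A})^{\#_A}=PT_iP$, never enters the argument.
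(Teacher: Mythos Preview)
The paper states Lemma~\ref{lem0004} without proof, so there is nothing to compare against directly. Your proposal is correct and self-contained. The reduction to the single auxiliary bound $M\ge \max(\cdots)$ via the identities \eqref{eq001}--\eqref{eq002} is exactly right, since both displayed inequalities rearrange to $\tfrac12(\|T_1+T_2\|_A^2+\|T_1-T_2\|_A^2)\ge \max(\cdots)$. Each of the three ingredient bounds is valid: the parallelogram identity in $\langle\cdot,\cdot\rangle_A$ together with $\|Sx\|_A\le\|S\|_A\|x\|_A$ for $S\in\mathcal{B}_A(\mathcal{H})$ handles the two $A$-positive terms (using $\langle T_i^{\#_A}T_i x,x\rangle_A=\|T_ix\|_A^2$, $\langle T_iT_i^{\#_A}x,x\rangle_A=\|T_i^{\#_A}x\|_A^2$, and $\|(T_1\pm T_2)^{\#_A}\|_A=\|T_1\pm T_2\|_A$), while the algebraic identity $2(T_1^2+T_2^2)=(T_1+T_2)^2+(T_1-T_2)^2$ and submultiplicativity dispose of the $T_1^2+T_2^2$ term. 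Your remark about avoiding the projection $P$ by writing everything through $\|T_i^{\#_A}x\|_A^2$ is a genuine nicety.
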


Following theorem demonstrates an upper bound for $B$-numerical radius of $2\times 2$ operator matrix using \eqref{ineq1} and Lemma \ref{lem0004}.
\begin{thm}\label{thm20005}
Let $T_1,T_2,T_3,T_4\in\mathcal{B}_A(\mathcal{H}).$ Then 
$$w_B\left(\begin{bmatrix}
    T_1 & T_2\\
    T_3 & T_4
    \end{bmatrix}\right)\leq \min(\alpha,\beta),$$
    where, $$\alpha=\left(\frac{\|T_1+T_2\|_A^2+\|T_1-T_2\|_A^2}{2}\right)^{\frac{1}{2}}+\left(\frac{\|T_4+T_3\|_A^2+\|T_4-T_3\|_A^2}{2}\right)^{\frac{1}{2}}$$
    and 
    $$\beta=\left(\frac{\|T_1+T_3\|_A^2+\|T_1^-T_3\|_A^2}{2}\right)^{\frac{1}{2}}+\left(\frac{\|T_2+T_4^{\#_A}\|_A^2+\|T_2-T_4^{\#_A}\|_A^2}{2}\right)^{\frac{1}{2}}.$$
\end{thm}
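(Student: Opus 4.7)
The strategy is to decompose $T$ as a sum of two ``sparse'' block operators in two different ways---row-wise for $\alpha$ and column-wise for $\beta$---and then chain together four ingredients: subadditivity of $w_B$, the inequality $w_B(\cdot)\leq\|\cdot\|_B$ from \eqref{ineq1}, the identity $\|M\|_B^2=\|MM^{\#_B}\|_B=\|M^{\#_B}M\|_B$ from \eqref{ineq0}, and the parallelogram-type estimate packaged in Lemma \ref{lem0004}. The key point is that each sparse splitting is engineered so that the appropriate choice of $MM^{\#_B}$ or $M^{\#_B}M$ becomes block-diagonal, reducing each $B$-norm to a single $A$-norm of precisely the form bounded by Lemma \ref{lem0004}.

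For the bound $\alpha$, I would write $T=M_1+M_2$ with $M_1=\begin{bmatrix}T_1 & T_2\\ 0 & 0\end{bmatrix}$ and $M_2=\begin{bmatrix}0 & 0\\ T_3 & T_4\end{bmatrix}$. Subadditivity of $w_B$ combined with \eqref{ineq1} gives $w_B(T)\leq\|M_1\|_B+\|M_2\|_B$. Since each $M_i$ has only one nonzero row, $M_iM_i^{\#_B}$ is block-diagonal with a single nonzero entry, and by \eqref{ineq0} together with Lemma \ref{lem0001}(i) we get $\|M_1\|_B^2=\|T_1T_1^{\#_A}+T_2T_2^{\#_A}\|_A$ and $\|M_2\|_B^2=\|T_3T_3^{\#_A}+T_4T_4^{\#_A}\|_A$. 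Lemma \ref{lem0004} yields (after combining its two displayed inequalities via $\max(a,b)+\min(a,b)=a+b$) the parallelogram bound $\|XX^{\#_A}+YY^{\#_A}\|_A\leq\frac{1}{2}(\|X+Y\|_A^2+\|X-Y\|_A^2)$; applying this with $(X,Y)=(T_1,T_2)$ and $(X,Y)=(T_3,T_4)$, taking square roots, and summing produces the $\alpha$ estimate.

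For the bound $\beta$, I would decompose by columns instead: $T=M_3+M_4$ with $M_3=\begin{bmatrix}T_1 & 0\\ T_3 & 0\end{bmatrix}$ and $M_4=\begin{bmatrix}0 & T_2\\ 0 & T_4\end{bmatrix}$. This time it is the dual product $M_i^{\#_B}M_i$ that is block-diagonal, giving $\|M_3\|_B^2=\|T_1^{\#_A}T_1+T_3^{\#_A}T_3\|_A$ and $\|M_4\|_B^2=\|T_2^{\#_A}T_2+T_4^{\#_A}T_4\|_A$. Lemma \ref{lem0004} in the companion form $\|X^{\#_A}X+Y^{\#_A}Y\|_A\leq\frac{1}{2}(\|X+Y\|_A^2+\|X-Y\|_A^2)$, applied with $(X,Y)=(T_1,T_3)$ and with $(X,Y)=(T_2,T_4^{\#_A})$ (the latter substitution is what introduces the $\#_A$ appearing on $T_4$ in $\beta$; the identity $(T^{\#_A})^{\#_A}=PTP$ together with the invisibility of the projection $P$ onto $\overline{\mathcal{R}(A)}$ to the $A$-seminorm reconciles $(T_4^{\#_A})^{\#_A}T_4^{\#_A}$ with the target term), and summing, finishes the second half.

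\textbf{Main obstacle.} The only genuine bookkeeping decision is choosing, for each of the two splittings, the correct factorization from \eqref{ineq0}---$MM^{\#_B}$ for the row-sparse pieces and $M^{\#_B}M$ for the column-sparse pieces---so that the resulting diagonal block matches exactly the shape ($\sum X_iX_i^{\#_A}$ or $\sum X_i^{\#_A}X_i$) that Lemma \ref{lem0004} controls by $\|X\pm Y\|_A^2$. Once these pairings are made, the rest of the argument is a routine application of subadditivity, \eqref{ineq1}, and the triangle inequality for the square-root sum.
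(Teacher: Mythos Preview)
Your approach is correct and, for the $\alpha$ bound, essentially identical to the paper's: both split $T$ into its two rows, pass from $w_B$ to $\|\cdot\|_B$, compute $\|M_i\|_B^2=\|M_iM_i^{\#_B}\|_B$ as a single $A$-norm, and invoke Lemma~\ref{lem0004}. The only cosmetic difference is that the paper handles the second row by first conjugating with the $B$-unitary $U=\begin{bmatrix}0&I\\I&0\end{bmatrix}$ to recycle the first-row computation, whereas you compute $M_2M_2^{\#_B}$ directly.

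For $\beta$ there is a slight methodological difference worth noting. Rather than your column decomposition (which uses $M_i^{\#_B}M_i$), the paper observes $w_B(T)=w_B(T^{\#_B})$ and simply applies the already-established $\alpha$ bound to $T^{\#_B}=\begin{bmatrix}T_1^{\#_A}&T_3^{\#_A}\\T_2^{\#_A}&T_4^{\#_A}\end{bmatrix}$, then simplifies via $\|X^{\#_A}\pm Y^{\#_A}\|_A=\|X\pm Y\|_A$. The paper's route is marginally slicker since it reuses the $\alpha$ work wholesale; your route is more symmetric and self-contained. Both land on the same inequality---in fact the paper's own proof produces $\|T_2\pm T_4\|_A$ in the second summand of $\beta$, so the $T_4^{\#_A}$ appearing in the theorem statement is a typographical slip, and your $(T_4^{\#_A})^{\#_A}$ reconciliation is an unnecessary detour to match a misprint.
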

\begin{proof}
We know that
\begin{align*}
    w_B\left(\begin{bmatrix}
    T_1& T_2\\
    0& 0
    \end{bmatrix}\right)&\leq \left\|\begin{bmatrix}
    T_1& T_2\\
    0 & 0
    \end{bmatrix}\right\|_B \\
    &=\left\|\begin{bmatrix}
    T_1& T_2\\
    0& 0
    \end{bmatrix}\begin{bmatrix}
    T_1& T_2\\
    0& 0
    \end{bmatrix}^{\#_B}\right\|_B^{\frac{1}{2}}\\
    &=\left\|\begin{bmatrix}
    T_1& T_2\\
    0& 0
    \end{bmatrix}\begin{bmatrix}
    T_1^{\#_A} & 0\\
    T_2^{\#_A}& 0
    \end{bmatrix}\right\|_B^{\frac{1}{2}}\\
    &=\left\|\begin{bmatrix}
    T_1T_1^{\#_A}+T_2T_2^{\#_A} & 0\\
    0& 0
    \end{bmatrix}\right\|_B^{\frac{1}{2}}\\
    &=\|T_1T_1^{\#_A}+T_2T_2^{\#_A}\|_A^{\frac{1}{2}}.
\end{align*}
By using Lemma \ref{lem0004}, we get 
\begin{align*}
    w_B\left(\begin{bmatrix}
    T_1 & T_2\\
    0& 0
    \end{bmatrix}\right)&\leq \left(\max(\|T_1+T_2\|_A^2,\|T_1-T_2\|_A^2)-\frac{|\|T_1+T_2\|_A^2-\|T_1-T_2\|_A^2|}{2}\right)^{\frac{1}{2}}\\
    &=\left(\frac{\|T_1+T_2\|_A^2+\|T_1-T_2\|_A^2}{2}\right)^{\frac{1}{2}}.
\end{align*}
Let us take $U=\begin{bmatrix}
    0 & I\\
    I & 0
    \end{bmatrix},$ where $U$ is $B$-unitary.
Now, we have
\begin{align*}
    w_B\left(\begin{bmatrix}
    T_1 & T_2\\
    T_3 & T_4
    \end{bmatrix}\right)&\leq w_B\left(\begin{bmatrix}
    T_1 & T_2\\
    0 & 0
    \end{bmatrix}\right)+w_B\left(\begin{bmatrix}
    0 & 0\\
    T_3 & T_4
    \end{bmatrix}\right)\\
    &= w_B\left(\begin{bmatrix}
    T_1 & T_2\\
    0 & 0
    \end{bmatrix}\right)+w_B\left(U^{\#_B}\begin{bmatrix}
    T_4 & T_3\\
    0 & 0
    \end{bmatrix}U\right) \\
    &=w_B\left(\begin{bmatrix}
    T_1 & T_2\\
    0 & 0
    \end{bmatrix}\right)+w_B\left(\begin{bmatrix}
    T_4 & T_3\\
    0 & 0
    \end{bmatrix}\right)\\
    &\leq \left(\frac{\|T_1+T_2\|_A^2+\|T_1-T_2\|_A^2}{2}\right)^{\frac{1}{2}}+\left(\frac{\|T_4+T_3\|_A^2+\|T_4-T_3\|_A^2}{2}\right)^{\frac{1}{2}}=\alpha
\end{align*}
Applying the previous calculation to $\begin{bmatrix}
    T_1^{\#_A} & T_3^{\#_A}\\
    T_2^{\#_A} & T_4^{\#_A}
    \end{bmatrix}$ in the place of $\begin{bmatrix}
    T_1 & T_2\\
    T_3 & T_4
    \end{bmatrix}$, we obtain
    \begin{align*}
         w_B\left(\begin{bmatrix}
    T_1 & T_2\\
    T_3 & T_4
    \end{bmatrix}\right)&= w_B\left(\begin{bmatrix}
    T_1^{\#_A} & T_3^{\#_A}\\
    T_2^{\#_A} & T_4^{\#_A}
    \end{bmatrix}\right)\\
    &\leq \left(\frac{\|T_1^{\#_A}+T_3^{\#_A}\|_A^2+\|T_1^{\#_A}-T_3^{\#_A}\|_A^2}{2}\right)^{\frac{1}{2}}+\left(\frac{\|T_2^{\#_A}+T_4^{\#_A}\|_A^2+\|T_2^{\#_A}-T_4^{\#_A}\|_A^2}{2}\right)^{\frac{1}{2}}\\
    &=\left(\frac{\|T_1+T_3\|_A^2+\|T_1-T_3\|_A^2}{2}\right)^{\frac{1}{2}}+\left(\frac{\|T_2+T_4\|_A^2+\|T_2-T_4\|_A^2}{2}\right)^{\frac{1}{2}}=\beta.
    \end{align*}
    Hence, we get the desired  result.
\end{proof}
Next result shows a lower bound for $B$-numerical radius of a $2\times 2$ operator matrix in which 2nd row is zero.
\begin{thm}
Let $T_1,T_2\in\mathcal{B}_A(\mathcal{H}).$ Then
$$w_B\left(\begin{bmatrix}
T_1 & T_2\\
0 & 0
\end{bmatrix}\right)\geq \frac{1}{2}\max(w_A(T_1\pm T_2),w_A(T_1\pm iT_2)).$$
\end{thm}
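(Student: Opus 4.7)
The plan is to bound $w_B(T)$ from below directly from its definition, by feeding in a one-parameter family of unit test vectors in $\mathcal{H}\oplus\mathcal{H}$ that produces all four lower bounds from a single computation. Denote $T=\begin{bmatrix}T_1 & T_2\\ 0 & 0\end{bmatrix}$.

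For each unimodular scalar $\lambda\in\{1,-1,i,-i\}$ and each $z\in\mathcal{H}$ with $\|z\|_A=1$, set $\xi_\lambda:=\tfrac{1}{\sqrt{2}}\begin{bmatrix}z\\ \lambda z\end{bmatrix}\in\mathcal{H}\oplus\mathcal{H}$. Because $|\lambda|=1$ and $\|\cdot\|_A$ is absolutely homogeneous, one verifies $\|\xi_\lambda\|_B^2=\tfrac{1}{2}\|z\|_A^2+\tfrac{1}{2}|\lambda|^2\|z\|_A^2=1$, so $\xi_\lambda$ is a unit vector for the $B$-seminorm.

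Next, using the block form of $T$, a one-line computation gives
\begin{equation*}
T\xi_\lambda=\tfrac{1}{\sqrt{2}}\begin{bmatrix}(T_1+\lambda T_2)z\\ 0\end{bmatrix},\qquad
\langle T\xi_\lambda,\xi_\lambda\rangle_B=\tfrac{1}{2}\langle (T_1+\lambda T_2)z,z\rangle_A.
\end{equation*}
Taking absolute values and then the supremum over all $z$ with $\|z\|_A=1$ yields $w_B(T)\ge \tfrac{1}{2}w_A(T_1+\lambda T_2)$. Substituting $\lambda=1,-1,i,-i$ produces the four bounds $\tfrac{1}{2}w_A(T_1\pm T_2)$ and $\tfrac{1}{2}w_A(T_1\pm iT_2)$; taking their maximum completes the proof.

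There is no real obstacle here: the argument is essentially a choice of clever test vectors. The only mildly delicate point is checking that $\|\xi_\lambda\|_B=1$ in the semi-Hilbertian setting, which reduces immediately to $|\lambda|=1$. The design choice that makes the computation work is pairing the two block-components of $\xi_\lambda$ as common scalar multiples of the same $z$, so that $\langle T\xi_\lambda,\xi_\lambda\rangle_B$ collapses into a single $A$-inner product of the combination $T_1+\lambda T_2$.
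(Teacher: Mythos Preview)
Your argument is correct. The computation is straightforward and the test vectors $\xi_\lambda=\tfrac{1}{\sqrt{2}}\begin{bmatrix}z\\ \lambda z\end{bmatrix}$ with $\lambda\in\{1,-1,i,-i\}$ do exactly what you claim: $\|\xi_\lambda\|_B=1$ and $\langle T\xi_\lambda,\xi_\lambda\rangle_B=\tfrac12\langle(T_1+\lambda T_2)z,z\rangle_A$, so taking the supremum over $z$ gives each of the four lower bounds.

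The paper proceeds quite differently. Instead of test vectors, it invokes the identities
\[
w_B\!\begin{bmatrix}T_1 & T_2\\ T_2 & T_1\end{bmatrix}=\max\{w_A(T_1+T_2),w_A(T_1-T_2)\},\qquad
w_B\!\begin{bmatrix}T_1 & -T_2\\ T_2 & T_1\end{bmatrix}=\max\{w_A(T_1+iT_2),w_A(T_1-iT_2)\},
\]
(Lemma~\ref{lem0001}(iv) and Lemma~\ref{l002}), and then writes each of these full $2\times 2$ matrices as a sum of two $B$-unitary conjugates of $\begin{bmatrix}T_1 & T_2\\ 0 & 0\end{bmatrix}$, using $U=\begin{bmatrix}0 & I\\ I & 0\end{bmatrix}$ and $V=\begin{bmatrix}I & 0\\ 0 & -I\end{bmatrix}$; the triangle inequality together with $w_B(U^{\#_B}TU)=w_B(T)$ then yields the factor $2$. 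Your route is more elementary and self-contained: it bypasses the two lemmas and the unitary-invariance machinery entirely. A side benefit is that your argument does not rely on the hypothesis $A>0$, whereas the lemmas the paper invokes are stated under that assumption. The paper's approach, on the other hand, keeps the proof within the block-matrix/unitary-conjugation framework used throughout the article and exhibits the structural reason the bound holds.
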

\begin{proof}
Let $U=\begin{bmatrix}
0 & I\\I & 0
\end{bmatrix}$ a $B$-unitary operator. 
Then
\begin{align*}
    \max(w_A(T_1+T_2),w_A(T_1-T_2))&=w_B\left(\begin{bmatrix}
    T_1 & T_2\\T_2 & T_1
    \end{bmatrix}\right) \mbox{~~by Lemma \ref{lem0001}}\\
    &=w_B\left(\begin{bmatrix}
    T_1 & T_2\\0 & 0
    \end{bmatrix}+U^{\#_B}\begin{bmatrix}
    T_1 & T_2\\0 & 0
    \end{bmatrix}U\right)\\
    &\leq w_B\left(\begin{bmatrix}
    T_1 & T_2\\0 & 0
    \end{bmatrix}\right)+w_B\left(U^{\#_B}\begin{bmatrix}
    T_1 & T_2\\0 & 0
    \end{bmatrix}U\right)\\
    &=2w_B\left(\begin{bmatrix}
    T_1 & T_2\\0 & 0
    \end{bmatrix}\right).
\end{align*}
Setting $V=\begin{bmatrix}
    I & 0\\0 & -I
    \end{bmatrix},$ it is not difficult to see that $V$ is $B$-unitary.
    Now,
    \begin{align*}
        \max(w_A(T_1+iT_2), w_A(T_1-iT_2))=&w_B\left(\begin{bmatrix}
    T_1 & -T_2\\T_2 & T_1
    \end{bmatrix}\right) \mbox{~~by Lemma \ref{l002}}\\
    &=w_B\left(V^{\#_A}\begin{bmatrix}
    T_1 & T_2\\0 & 0
    \end{bmatrix}V+U^{\#_A}\begin{bmatrix}
    T_1 & T_2\\0 & 0
    \end{bmatrix}U\right)\\
    &\leq w_B\left(V^{\#_B}\begin{bmatrix}
    T_1 & T_2\\0 & 0
    \end{bmatrix}V\right )+w_B\left(U^{\#_B}\begin{bmatrix}
    T_1 & T_2\\0 & 0
    \end{bmatrix}U\right)\\
    &=2w_B\left(\begin{bmatrix}
    T_1 & T_2\\0 & 0
    \end{bmatrix}\right).
    \end{align*}
   Hence, we get
   $$w_B\left(\begin{bmatrix}
    T_1 & T_2\\0 & 0
    \end{bmatrix}\right)\geq \frac{1}{2}\max(w_A(T_1\pm T_2), w_A(T_1\pm iT_2)).$$
\end{proof}
Further upper bound for the $B$-numerical radius of $\begin{bmatrix}
T_1 & T_2\\
T_3 & T_4
\end{bmatrix}$ is proved next using the Lemma \ref{lem00003}.
\begin{thm}\label{thm20006}
Let $T_1,T_2\in \mathcal{B}_A(\mathcal{H}).$ 

   $$w_B\left(\begin{bmatrix}
T_1 & T_2\\
T_3 & T_4
\end{bmatrix}\right) \leq \min\{\alpha,\beta\},$$
where, 
\begin{align*}
    \alpha=&\frac{1}{\sqrt{2}}\sqrt{\|T_1\|_A^2+\|T_2\|_A^2+\sqrt{(\|T_1\|_A^2-\|T_2\|_A^2)^2+4\|T_1^{\#_A}T_2\|_A^2}}\\&+\frac{1}{\sqrt{2}}\sqrt{\|T_3\|_A^2+\|T_4\|_A^2+\sqrt{(\|T_3\|_A^2-\|T_4\|_A^2)^2+4\|T_4T_3^{\#_A}\|_A^2}}
\end{align*}
and
\begin{align*}
    \beta=&\frac{1}{\sqrt{2}}\sqrt{\|T_1\|_A^2+\|T_3\|_A^2+\sqrt{(\|T_1\|_A^2-\|T_3\|_A^2)^2+4\|T_1^{\#_A}T_3\|_A^2}}\\&+\frac{1}{\sqrt{2}}\sqrt{\|T_2\|_A^2+\|T_4\|_A^2+\sqrt{(\|T_2\|_A^2-\|T_4\|_A^2)^2+4\|T_4^{\#_A}T_2\|_A^2}}.
\end{align*}
\end{thm}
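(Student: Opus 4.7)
The plan is to adapt the template of the proof of Theorem \ref{thm20005}, substituting Lemma \ref{lem00003} in place of Lemma \ref{lem0004}. I will first bound $w_B$ on a matrix with zero second row, then combine two such bounds via subadditivity and a $B$-unitary trick; the second bound $\beta$ is obtained by running the same argument on $T^{\#_B}$.

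For the one-row building block, start from $w_B(S)\le\|S\|_B$ in \eqref{ineq1} and the identity $\|S\|_B^2=\|S^{\#_B}S\|_B$ from \eqref{ineq0}, where $S=\begin{bmatrix}T_1&T_2\\0&0\end{bmatrix}$. A direct block multiplication gives
\[
S^{\#_B}S=\begin{bmatrix}T_1^{\#_A}T_1 & T_1^{\#_A}T_2\\ T_2^{\#_A}T_1 & T_2^{\#_A}T_2\end{bmatrix}.
\]
Apply Lemma \ref{lem00003} to dominate $\|S^{\#_B}S\|_B$ by the ordinary operator norm of the scalar $2\times 2$ matrix whose entries are the $A$-seminorms of these blocks. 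Using $\|T_j^{\#_A}T_j\|_A=\|T_j\|_A^2$ from \eqref{ineq0} and the identity $\|T_2^{\#_A}T_1\|_A=\|T_1^{\#_A}T_2\|_A$, one obtains a positive semidefinite symmetric $2\times 2$ scalar matrix whose operator norm equals its largest eigenvalue, namely
\[
\tfrac{1}{2}\Big(\|T_1\|_A^2+\|T_2\|_A^2+\sqrt{(\|T_1\|_A^2-\|T_2\|_A^2)^2+4\|T_1^{\#_A}T_2\|_A^2}\Big).
\]
Taking the square root produces exactly the first summand of $\alpha$.

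Next, decompose $T=\begin{bmatrix}T_1&T_2\\0&0\end{bmatrix}+\begin{bmatrix}0&0\\T_3&T_4\end{bmatrix}$ and use subadditivity of $w_B$. The $B$-unitary $U=\begin{bmatrix}0&I\\I&0\end{bmatrix}$ satisfies $U^{\#_B}\begin{bmatrix}0&0\\T_3&T_4\end{bmatrix}U=\begin{bmatrix}T_4&T_3\\0&0\end{bmatrix}$, so invariance of $w_B$ under $B$-unitary conjugation reduces the second summand to the building block with the pair $(T_4,T_3)$, which gives the second term of $\alpha$. For $\beta$, run the full argument on $T^{\#_B}=\begin{bmatrix}T_1^{\#_A}&T_3^{\#_A}\\ T_2^{\#_A}&T_4^{\#_A}\end{bmatrix}$, using $w_B(T)=w_B(T^{\#_B})$ together with $\|X^{\#_A}\|_A=\|X\|_A$; this effectively transposes the block pattern of $T$, so the couples become $(T_1,T_3)$ and $(T_2,T_4)$ in place of $(T_1,T_2)$ and $(T_3,T_4)$. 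Taking the minimum of the two resulting bounds completes the proof.

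The main subtlety is the symmetrization step used to invoke the explicit eigenvalue formula: since $(X^{\#_A})^{\#_A}=PXP\neq X$ in general (with $P$ the orthogonal projection onto $\overline{\mathcal{R}(A)}$), one has to verify that $P$ is absorbed by the $A$-seminorm, so that the dominating $2\times 2$ scalar matrix is genuinely symmetric and its norm is given by the largest eigenvalue in closed form. Once that observation is in place (it follows from $A(I-P)=0$, hence $\|PY\|_A=\|Y\|_A$), the rest reduces to routine block-matrix bookkeeping.
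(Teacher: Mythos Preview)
Your proposal is correct and matches the paper's intended approach: the paper does not actually write out a proof of Theorem~\ref{thm20006} but only announces, in the sentence preceding it, that the bound ``is proved next using the Lemma~\ref{lem00003}'', and then passes directly to a corollary. Your plan implements exactly this hint along the template of Theorem~\ref{thm20005}: bound $w_B$ of a one-row block by its $B$-seminorm via \eqref{ineq1}--\eqref{ineq0}, apply Lemma~\ref{lem00003} to $S^{\#_B}S$ to dominate by the $2\times 2$ scalar matrix of $A$-seminorms, read off its largest eigenvalue, split $T$ into two one-row pieces using subadditivity and the $B$-unitary swap $U=\begin{bmatrix}0&I\\I&0\end{bmatrix}$ (just as in the proof of Theorem~\ref{thm20005}), and obtain $\beta$ by running the same argument on $T^{\#_B}$. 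Your remark that $P$ is absorbed on either side (from $AP=A$ and $T^{\#_A}P=T^{\#_A}$), making the dominating scalar matrix symmetric so that its norm equals the displayed eigenvalue, is the one genuine subtlety, and you have handled it correctly.
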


We now give an special case of Theorem \ref{thm20006} in the following corollary.
\begin{cor}
Let $T_1,T_2,T_3,T_4\in \mathcal{B}_A(\mathcal{H}).$\\
(a) If $T_1^{\#_A}T_2=0= T_4T_3^{\#_A},$ then 
$$w_B\left(\begin{bmatrix}
T_1 & T_2\\
T_3 & T_4
\end{bmatrix}\right) \leq \max(\|T_1\|_A, \|T_2\|_A)+\max(\|T_3\|_A,\|T_4\|_A).$$
(b) If $T_1^{\#_A}T_3=0= T_4^{\#_A}T_2,$ then 
$$w_B\left(\begin{bmatrix}
T_1 & T_2\\
T_3 & T_4
\end{bmatrix}\right) \leq \max(\|T_1\|_A, \|T_3\|_A)+\max(\|T_2\|_A,\|T_4\|_A).$$
\end{cor}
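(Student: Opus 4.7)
The plan is to deduce this as a direct consequence of Theorem \ref{thm20006}, which provides the bound $w_B\left(\begin{bmatrix}T_1 & T_2\\ T_3 & T_4\end{bmatrix}\right)\leq \min\{\alpha,\beta\}$, by specializing $\alpha$ (for part (a)) and $\beta$ (for part (b)) under the stated orthogonality-type hypotheses.

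For part (a), I would substitute $\|T_1^{\#_A}T_2\|_A=0$ and $\|T_4T_3^{\#_A}\|_A=0$ into the expression for $\alpha$. Each inner square root then reduces to $(\|T_i\|_A^2-\|T_j\|_A^2)^2$, whose square root equals the absolute value $|\|T_i\|_A^2-\|T_j\|_A^2|$. Then I invoke the elementary identity $a+b+|a-b|=2\max(a,b)$ for real numbers $a,b\geq 0$ to collapse the two terms under the outer square roots into $2\max(\|T_1\|_A^2,\|T_2\|_A^2)$ and $2\max(\|T_3\|_A^2,\|T_4\|_A^2)$ respectively. After the $\tfrac{1}{\sqrt{2}}$ prefactor cancels the factor of $2$ inside the square root, and using $\sqrt{\max(a^2,b^2)}=\max(a,b)$ for nonnegative reals, the bound $\alpha$ collapses to $\max(\|T_1\|_A,\|T_2\|_A)+\max(\|T_3\|_A,\|T_4\|_A)$, which is the claim in (a).

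For part (b), the argument is identical but applied to $\beta$ instead: substituting $\|T_1^{\#_A}T_3\|_A=0$ and $\|T_4^{\#_A}T_2\|_A=0$ eliminates the cross terms inside the two radicals defining $\beta$, and the same $a+b+|a-b|=2\max(a,b)$ simplification yields $\beta=\max(\|T_1\|_A,\|T_3\|_A)+\max(\|T_2\|_A,\|T_4\|_A)$.

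There is no real obstacle here; the corollary is essentially algebraic bookkeeping applied to the formulas in Theorem \ref{thm20006}. The only point to be slightly careful about is invoking the correct simplification identity and noting that the hypotheses only require the specific cross-product seminorms appearing in $\alpha$ (respectively $\beta$) to vanish, so the two parts correspond to which of $\alpha$, $\beta$ one chooses in the $\min\{\alpha,\beta\}$ bound.
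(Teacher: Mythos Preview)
Your argument is correct and is exactly the approach the paper intends: the corollary is presented there as a special case of Theorem~\ref{thm20006} with no further proof, and your algebraic reduction via the identity $a+b+|a-b|=2\max(a,b)$ is precisely the computation that collapses $\alpha$ (resp.\ $\beta$) to the stated bound under the hypotheses of (a) (resp.\ (b)).
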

\begin{remark}
Note that  equality holds in \textnormal{Theorem \ref{thm20005}} and \textnormal{Theorem \ref{thm20006}} by setting $T_1=I,T_2=T_3=T_4=0$ . So $A$-numerical radius inequalities for $2\times 2$ operator matrices in  \textnormal{Theorem \ref{thm20005}} and \textnormal{Theorem \ref{thm20006}} are sharp. 
\end{remark}

\subsection{Refinements of $A$-numerical radius inequality for an operator}
In this subsection, we present two  refinements of \eqref{ineq1}. To do this, we need the  identity \eqref{eq001}.

The first refinement of inequality \eqref{ineq1} is proved next.
\begin{thm}
Let $T_1,T_2\in \mathcal{B}_A(\mathcal{H}).$ Then
\begin{align*}
    w_B\left(\begin{bmatrix}
    0 & T_1\\
    T_2 & 0
    \end{bmatrix}\right) 
    &\leq w_A(T_1)+w_A(T_2)-\frac{1}{2}|w_A(T_1+T_2)-w_A(T_1-T_2)|.
\end{align*}
In particular
$$\frac{\|T_1\|_A}{2}+\frac{\|Re T_1^{\#_A}\|_A-\|Im T_1^{\#_A}\|_A}{2}\leq w_A(T_1).$$
\end{thm}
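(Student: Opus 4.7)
The plan is to chain the upper bound from Lemma~\ref{lem0002} with identity~\eqref{eq001} and the subadditivity of the $A$-numerical radius, then specialize. Lemma~\ref{lem0002} supplies
$$w_B\left(\begin{bmatrix}0 & T_1\\ T_2 & 0\end{bmatrix}\right) \leq \tfrac{1}{2}\bigl(w_A(T_1+T_2) + w_A(T_1-T_2)\bigr).$$
Plugging $a = w_A(T_1+T_2)$ and $b = w_A(T_1-T_2)$ into \eqref{eq001} rewrites the right-hand side as $\max\{a,b\} - \tfrac{1}{2}|a-b|$. Since the $A$-numerical radius is subadditive, $w_A(T_1 \pm T_2) \leq w_A(T_1) + w_A(T_2)$, so $\max\{a,b\} \leq w_A(T_1) + w_A(T_2)$. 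Substituting delivers the main inequality on a single line.

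For the ``in particular'' part, I would specialize to $T_2 = T_1^{\#_A}$. Using $AT_1^{\#_A} = T_1^*A$, a short computation shows that
$$\left\langle \begin{bmatrix}0 & T_1\\ T_1^{\#_A} & 0\end{bmatrix}\begin{bmatrix}x_1\\ x_2\end{bmatrix},\begin{bmatrix}x_1\\ x_2\end{bmatrix}\right\rangle_B \;=\; 2\,\mathrm{Re}\,\langle T_1 x_2, x_1\rangle_A,$$
after which Cauchy--Schwarz combined with AM--GM ($\|x_1\|_A\|x_2\|_A \leq \tfrac{1}{2}$ whenever $\|x_1\|_A^2 + \|x_2\|_A^2 = 1$) together with an optimising choice of $x_1,x_2$ identifies the left-hand side of the main inequality with $\|T_1\|_A$. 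For the right-hand side, $w_A(T_1^{\#_A}) = w_A(T_1)$ (conjugation of the $\langle\cdot,\cdot\rangle_A$-scalar), while $T_1 + T_1^{\#_A} = 2\,Re_A(T_1)$ and $T_1 - T_1^{\#_A} = 2i\,Im_A(T_1)$. Both $Re_A(T_1)$ and $Im_A(T_1)$ are $A$-selfadjoint (from $A^* = A$ and $AT_1^{\#_A} = T_1^*A$), so their $A$-numerical radii coincide with their $A$-seminorms. Assembling these pieces gives
$$\|T_1\|_A \;\leq\; 2w_A(T_1) - \bigl|\|Re_A(T_1)\|_A - \|Im_A(T_1)\|_A\bigr|,$$
and rearranging, together with the trivial bound $|x|\geq x$, yields the stated particular case.

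The main obstacle is the bookkeeping in the specialization: one must justify the identity $w_B\left(\left[\begin{smallmatrix}0 & T_1\\ T_1^{\#_A} & 0\end{smallmatrix}\right]\right) = \|T_1\|_A$ and the $A$-selfadjointness of the $A$-real and $A$-imaginary parts. Nothing deeper than the defining relation $AT^{\#_A} = T^*A$ and the selfadjointness of $A$ is required, so these are routine once written out carefully.
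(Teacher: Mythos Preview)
Your argument for the main inequality is exactly the paper's: Lemma~\ref{lem0002}, then identity~\eqref{eq001}, then subadditivity of $w_A$.

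For the particular case, the paper makes the substitution $T_1\mapsto T_1^{\#_A}$, $T_2\mapsto (T_1^{\#_A})^{\#_A}$, whereas you take $T_2=T_1^{\#_A}$. Both choices render the off-diagonal block matrix $B$-selfadjoint, so that $w_B$ of it equals its $B$-seminorm $\|T_1\|_A$; the paper asserts this without comment, while you actually justify it via the explicit inner-product computation. The only wrinkle is that your substitution produces $\|Re_A(T_1)\|_A$ and $\|Im_A(T_1)\|_A$ on the right, whereas the statement (and the paper's substitution) gives $\|Re_A(T_1^{\#_A})\|_A$ and $\|Im_A(T_1^{\#_A})\|_A$. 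These agree because $Re_A(T_1)-Re_A(T_1^{\#_A})=\tfrac12\bigl(T_1-(T_1^{\#_A})^{\#_A}\bigr)=\tfrac12(T_1-PT_1P)$ has $A$-seminorm zero (since $AP=PA=A$), and similarly for the imaginary parts; you should add one line noting this to match the stated form exactly.
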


\begin{proof}
By  Lemma \ref{lem0002} and Equality \eqref{eq001}, we have
\begin{align*}
    w_B\left(\begin{bmatrix}
    0 & T_1\\
    T_2 & 0
    \end{bmatrix}\right) &\leq \frac{1}{2} \{w_A(T_1+T_2)+w_A(T_1-T_2)\}\\
    &=\max\{w_A(T_1+T_2), w_A(T_1-T_2)\}-\frac{1}{2}|w_A(T_1+T_2)-w_A(T_1-T_2)|\\
    &\leq w_A(T_1)+w_A(T_2)-\frac{1}{2}|w_A(T_1+T_2)-w_A(T_1-T_2)|.
\end{align*}
Replacing  $T_1$ by $T_1^{\#_A}$ and $T_2$ by $(T_1^{\#_A})^{\#_A}$, we get
\begin{align*}
    & w_B\left(\begin{bmatrix}
    0 & T_1^{\#_A}\\
    (T_1^{\#_A})^{\#_A} & 0
    \end{bmatrix}\right)\\ 
    &\leq w_A(T_1^{\#_A})+w_A((T_1^{\#_A})^{\#_A})-\frac{1}{2}|w_A(T_1^{\#_A}+(T_1^{\#_A})^{\#_A})-w_A(T_1^{\#_A}-(T_1^{\#_A})^{\#_A})|\\
    & =2w_A(T_1^{\#_A})-|\|Re T_1^{\#_A}\|_A-\|Im T_1^{\#_A}\|_A|.
\end{align*}
 So, $$\frac{\|T_1\|_A}{2}+\frac{\|Re T_1^{\#_A}\|_A-\|Im T_1^{\#_A}\|_A}{2}\leq w_A(T_1^{\#_A})=w_A(T_1).$$
\end{proof}

Another refinement of the Inequality \eqref{ineq1} is presented next.
\begin{thm}
Let $T_1, T_2\in \mathcal{B}_A(\mathcal{H}).$ Then
\begin{align*}
w_B\left(\begin{bmatrix}
    0 & T_1\\
    T_2 & 0
    \end{bmatrix}\right)+\frac{\|T_1\|_A+\|T_2\|_A}{2}+\frac{1}{2}\left|w_A(T_1+T_2)-\frac{\|T_1\|_A+\|T_2\|_A}{2}\right|+&\frac{1}{2}\left|w_A(T_1-T_2)-\frac{\|T_1\|_A+\|T_2\|_A}{2}\right|\\
    &\leq2(w_A(T_1)+w_A(T_2)).
    \end{align*}
    In particular, 
$$\frac{\|T_1\|_A}{2}+\frac{1}{4}\left|\|Re( T_1^{\#_A})\|_A-\frac{\|T_1\|_A}{2}\right|+\frac{1}{4}\left|\|Im( T_1^{\#_A})\|_A-\frac{\|T_1\|_A}{2}\right|\leq w_A(T_1).$$
\end{thm}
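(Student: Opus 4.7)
My plan is to reduce the stated inequality to a purely arithmetic manipulation using Lemma~\ref{lem0002} combined with the rearrangement $\max(x,y) = \tfrac{x+y}{2} + \tfrac{|x-y|}{2}$ of identity~\eqref{eq001}. Introduce the abbreviations $a = w_A(T_1+T_2)$, $b = w_A(T_1-T_2)$, and $c = \tfrac{\|T_1\|_A + \|T_2\|_A}{2}$. Lemma~\ref{lem0002} gives $w_B\left(\begin{bmatrix} 0 & T_1 \\ T_2 & 0\end{bmatrix}\right) \leq \tfrac{a+b}{2}$, so the left-hand side of the target inequality is at most $\tfrac{a+b}{2} + c + \tfrac{|a-c|}{2} + \tfrac{|b-c|}{2}$. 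The one-variable identity $\tfrac{x}{2} + \tfrac{|x-c|}{2} = \max(x,c) - \tfrac{c}{2}$ (applied once to $x = a$ and once to $x = b$) collapses this sum to $\max(a,c) + \max(b,c)$.

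To finish the general inequality, I estimate each maximum by $w_A(T_1)+w_A(T_2)$: the triangle inequality for $w_A$ gives $a, b \leq w_A(T_1) + w_A(T_2)$, while \eqref{ineq1} applied componentwise yields $\|T_i\|_A \leq 2 w_A(T_i)$ for $i=1,2$, hence $c \leq w_A(T_1)+w_A(T_2)$. Summing the two maxima then gives the desired $2(w_A(T_1)+w_A(T_2))$.

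For the particular case I substitute $T_1 \mapsto T_1^{\#_A}$ and $T_2 \mapsto (T_1^{\#_A})^{\#_A}$ in the main inequality. By~\eqref{ineq0} both $\|T_1^{\#_A}\|_A$ and $\|(T_1^{\#_A})^{\#_A}\|_A$ equal $\|T_1\|_A$; the sum $T_1^{\#_A} + (T_1^{\#_A})^{\#_A} = 2\,Re_A(T_1^{\#_A})$ and the difference $T_1^{\#_A} - (T_1^{\#_A})^{\#_A} = 2i\,Im_A(T_1^{\#_A})$ are both $A$-selfadjoint, so their $A$-numerical radii coincide with the corresponding $A$-seminorms; and the identity $w_A(T^{\#_A}) = w_A(T)$ (immediate from $\langle T^{\#_A}x,x\rangle_A = \overline{\langle Tx,x\rangle_A}$) makes the right-hand side equal to $4 w_A(T_1)$. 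Writing $S = \begin{bmatrix} 0 & T_1^{\#_A} \\ (T_1^{\#_A})^{\#_A} & 0\end{bmatrix}$, I then supply the sharpened lower bound $w_B(S) \geq \|T_1\|_A$: a short calculation using $AT_1^{\#_A} = T_1^*A$ and $\mathcal{R}(T_1^*A) \subseteq \overline{\mathcal{R}(A)}$ shows $S$ is $B$-selfadjoint, so $w_B(S) = \|S\|_B$, and testing $S$ against vectors of the form $\begin{bmatrix} 0 \\ x \end{bmatrix}$ with $\|x\|_A = 1$ yields $\|S\|_B \geq \|T_1^{\#_A}\|_A = \|T_1\|_A$. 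Substituting and dividing by $4$ delivers the stated refinement. The main obstacle is the algebraic collapse via the $\max$ identity in the first step and, in the particular case, securing the strengthened lower bound $w_B(S) \geq \|T_1\|_A$ rather than the weaker $\tfrac{1}{2}\|S\|_B$ available from~\eqref{ineq1}; this relies on recognising that $S$ is $B$-selfadjoint.
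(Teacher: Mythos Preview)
Your argument is correct. The paper in fact omits the proof of this theorem entirely, so there is nothing to compare against directly; however, your approach mirrors precisely the method used in the preceding theorem (Lemma~\ref{lem0002} together with identity~\eqref{eq001}), which is clearly the intended route. The algebraic collapse of the left-hand side to $\max(a,c)+\max(b,c)$ is clean, and for the particular case your recognition that $S=\begin{bmatrix}0 & T_1^{\#_A}\\ (T_1^{\#_A})^{\#_A} & 0\end{bmatrix}$ is $B$-selfadjoint (so that $w_B(S)=\|S\|_B\geq\|T_1\|_A$ rather than merely $w_B(S)\geq\tfrac12\|S\|_B$) is exactly the point needed to obtain the stated leading term $\tfrac{\|T_1\|_A}{2}$ after dividing by~$4$.
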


\vspace{.6cm}
\noindent
{\small {\bf Acknowledgments.}\\
We  thank the {\bf Government of India} for introducing the {\it work from home initiative} during the COVID-19 crisis.
}
\section{References}
	\bibliographystyle{amsplain}

\end{document}